\numberwithin{equation}{section}
\newtheorem{theorem}{Theorem}[section]
\newtheorem{corollary}{Corollary}[section]
\newtheorem{proposition}{Proposition}[section]
\newtheorem{lemma}{Lemma}[section]
\newtheorem{definition}{Definition}[section]
\newfont{\got}{eufm9 scaled 1095}
\newfont{\w}{msbm9 scaled\magstep1}
\begin{document}

\title{Complex connections on conformal K\"ahler manifolds with Norden metric}

\author{Marta Teofilova}
\date{}

\maketitle

\begin{abstract} An eight-parametric family of complex connections on a class complex
manifolds with Norden metric is introduced. The form of the
curvature tensor with respect to each of these connections is
obtained. The conformal group of the considered connections is
studied and some conformal invariants are obtained.

\noindent \emph{2010 Mathematics Subject Classification}: 53C15,
53C50.

\noindent \emph{Keywords}: complex connection, complex manifold,
Norden metric.

\end{abstract}

\maketitle

\section{Introduction}

Almost complex manifolds with Norden metric were first studied by
A.~P.~Norden \cite{N} and are introduced in \cite{Gri-Mek-Dje} as
generalized $B$-manifolds. A classification of these manifolds with
respect to the covariant derivative of the almost complex structure
is obtained in \cite{Gan} and two equivalent classifications are
given in \cite{Gan-Mih,Gan-Gri-Mih2}.

An important problem in the geometry of almost complex manifolds
with Norden metric is the study of linear connections preserving the
almost complex structure or preserving both, the structure and the
metric. The first ones are called almost complex connections, and
the second ones are known as natural connections. A special type of
natural connection is the canonical one. In \cite{Gan-Mih} it is
proved that on an almost complex manifold with Norden metric there
exists a unique canonical connection. The canonical connection
(called also the $B$-connection) and its conformal group on a
conformal K\"{a}hler manifold with Norden metric are studied in
\cite{Gan-Gri-Mih2}.

In \cite{Teo3} we have obtained a two-parametric family of complex
connections on a conformal K\"{a}hler manifold with Norden metric
and have proved that the curvature tensors corresponding to these
connections coincide with the curvature tensor of the canonical
connection.

In the present work we continue our research on complex connections
on complex manifolds with Norden metric by focusing our attention on
the class of the conformal K\"ahler manifolds, i.e. manifolds which
are conformally equivalent to K\"aher manifolds with Norden metric.
We introduce an eight-parametric family of complex connections on
such manifolds and consider their curvature properties. We also
study the conformal group of these connections and obtain some
conformal invariants. In the last section we give an example of a
four-dimensional conformal K\"ahler manifold with Norden metric, on
which the considered complex connections are flat.

\section{Preliminaries}

Let $(M,J,g)$ be a $2n$-dimensional almost complex manifold with
Norden metric, i.e. $J$ is an almost complex structure and $g$ is a
pseudo-Riemannian metric on $M$ such that
\begin{equation}\label{11}
J^{2}x=-x,\qquad g(Jx,Jy)=-g(x,y)
\end{equation}
for all differentiable vector fields $x$, $y$ on $M$, i.e. $x,y\in %
\mathfrak{X}(M)$.

The associated metric $\widetilde{g}$ of $g$ is given by $\widetilde{g}%
(x,y)=g(x,Jy)$ and is a Norden metric, too. Both metrics are
necessarily neutral, i.e. of signature $(n,n)$.

If $\nabla $ is the Levi-Civita connection of the metric $g$, the
fundamental tensor field $F$ of type $(0,3)$ on $M$ is defined by
\begin{equation}\label{F}
F(x,y,z)=g\left((\nabla _{x}J)y,z\right)
\end{equation}
and has the following symmetries
\begin{equation}\label{Fp}
F(x,y,z)=F(x,z,y)=F(x,Jy,Jz).
\end{equation}

Let $\left\{ e_{i}\right\} $ ($i=1,2,\ldots ,2n$) be an arbitrary
basis of $ T_{p}M$ at a point $p$ of $M$. The components of the
inverse matrix of $g$ are denoted by $g^{ij}$ with respect to the
basis $\left\{ e_{i}\right\} $. The Lie 1-forms $\theta $ and
$\theta^{\ast}$ associated with $F$, and the Lie vector $\Omega$,
corresponding to $\theta$, are defined by, respectively
\begin{equation}\label{1-3}
\theta (x)=g^{ij}F(e_{i},e_{j},x), \qquad \theta^{\ast}=\theta \circ
J, \qquad \theta (x)=g(x,\Omega ).
\end{equation}

The Nijenhuis tensor field $N$ for $J$ is given by \cite{Ko-No}
\begin{equation*}
N(x,y)=[Jx,Jy]-[x,y]-J[Jx,y]-J[x,Jy].
\end{equation*}
It is known \cite{N-N} that the almost complex structure is complex
if and only if it is integrable, i.e. iff $N=0$.

A classification of the almost complex manifolds with Norden metric
is introduced in \cite{Gan}, where eight classes of these manifolds
are characterized according to the properties of $F$. The three
basic classes $\mathcal{W}_{i}$ ($i=1,2,3$) are given by

$\bullet$ the class $\mathcal{W}_{1}$:
\begin{equation}\label{w1}
\begin{array}{l}
F(x,y,z)=\frac{1}{2n}\left[ g(x,y)\theta (z)+g(x,Jy)\theta
(Jz)\right. \medskip \\
\quad \qquad \qquad \quad \left. +g(x,z)\theta (y)+g(x,Jz)\theta
(Jy)\right];
\end{array}
\end{equation}

$\bullet$ the class $\mathcal{W}_{2}$ of the \emph{special complex
manifolds with Norden metric}:
\begin{equation}\label{w2}
F(x,y,Jz)+F(y,z,Jx)+F(z,x,Jy)=0,\quad \theta =0
\quad\Leftrightarrow\quad N=0,\quad \theta=0;
\end{equation}

$\bullet$ the class $\mathcal{W}_{3}$ of the \emph{quasi-K\"{a}hler
manifolds with Norden metric}:
\begin{equation}\label{w3}
F(x,y,z)+F(y,z,x)+F(z,x,y)=0.
\end{equation}
The special class $\mathcal{W}_{0}$ of the \emph{K\"{a}hler
manifolds with Norden metric} is characterized by the condition
$F=0$ ($\nabla J=0$) and is contained in each one of the other
classes.

Let $R$ be the curvature tensor of $\nabla $, i.e.
$R(x,y)z=\nabla _{x}\nabla _{y}z-\nabla _{y}\nabla _{x}z-\nabla _{\left[ x,y%
\right] }z$. The corresponding (0,4)-type tensor is defined by
$R(x,y,z,u)=g\left( R(x,y)z,u\right)$.

A tensor $L$ of type (0,4) is called a \emph{curvature-like} tensor
if it has the properties of $R$, i.e.
\begin{equation}\label{L}
\begin{array}{l}
L(x,y,z,u)=-L(y,x,z,u)=-L(x,y,u,z),\medskip\\
L(x,y,z,u)+L(y,z,x,u)+L(z,x,y,u)=0.
\end{array}
\end{equation}

The Ricci tensor $\rho(L)$ and the scalar curvatures $\tau(L)$ and $
\tau^{\ast}(L)$ of $L$ are defined by:
\begin{equation}
\begin{array}{c}
\rho(L)(y,z)=g^{ij}L(e_{i},y,z,e_{j}),\medskip\\
\tau(L)=g^{ij}\rho(L)(e_{i},e_{j}),\quad
\tau^{\ast}(L)=g^{ij}\rho(L) (e_{i},Je_{j}).
\end{array}
\label{Ricci, tao}
\end{equation}

A curvature-like tensor $L$ is called a \emph{K\"{a}hler tensor} if
\begin{equation}\label{Ka}
L(x,y,Jz,Ju) = - L(x,y,z,u).
\end{equation}

Let $S$ be a tensor of type (0,2). We consider the following tensors
\cite{Gan-Gri-Mih2}:
\begin{equation}\label{psi}
\begin{array}{l}
\psi_{1}(S)(x,y,z,u) =g(y,z)S(x,u)-g(x,z)S(y,u) \smallskip\\
\phantom{\psi_{1}(S)(x,y,z,u)}+  g(x,u)S(y,z) - g(y,u)S(x,z), \medskip\\
\psi_{2}(S)(x,y,z,u) =g(y,Jz)S(x,Ju) - g(x,Jz)S(y,Ju) \smallskip\\
 \phantom{\psi_{1}(S)(x,y,z,u)} + g(x,Ju)S(y,Jz) - g(y,Ju)S(x,Jz),  \medskip\\
\pi_{1}=\frac{1}{2}\psi_{1}(g), \qquad
\pi_{2}=\frac{1}{2}\psi_{2}(g),\qquad
\pi_{3}=-\psi_{1}(\widetilde{g})=\psi_{2}(\widetilde{g}).
\end{array}
\end{equation}
The tensor $\psi_{1}(S)$ is curvature-like if $S$ is symmetric, and
the tensor $\psi_{2}(S)$ is curvature-like if $S$ is symmetric and
hybrid with respect to $J$, i.e. $S(x,Jy)=S(y,Jx)$. In the last case
the tensor $\{\psi_1 - \psi_2\}(S)$ is K\"ahlerian. The tensors
$\pi_{1} - \pi_{2}$ and $\pi_{3}$ are also K\"{a}hlerian.

The usual conformal transformation of the Norden metric $g$
(conformal transformation of type I \cite{Gan-Gri-Mih2}) is defined
by
\begin{equation}\label{conf}
\overline{g}=e^{2u}g,
\end{equation}
where $u$ is a pluriharmonic function, i.e. the 1-form $du\circ J$
is closed.

A $\mathcal{W}_1$-manifold with closed 1-forms $\theta$ and
$\theta^{\ast}$ (i.e. $\mathrm{d}\theta=\mathrm{d}\theta^\ast=0$) is
called a \emph{conformal K\"ahler manifold with Norden metric}.
Necessary and sufficient conditions for a $\mathcal{W}_1$-manifold
to be conformal K\"ahlerian are:
\begin{equation}\label{cK}
(\nabla_x\theta)y=(\nabla_y\theta)x,\qquad
(\nabla_x\theta)Jy=(\nabla_y\theta)Jx.
\end{equation}
The subclass of these manifolds is denoted by
$\mathcal{W}_{1}^{\hspace{0.01in}0}$.

It is proved \cite{Gan-Gri-Mih2} that a
$\mathcal{W}_{1}^{\hspace{0.01in}0}$-manifold is conformally
equivalent to a K\"ahler manifold with Norden metric by the
transformation (\ref{conf}).

It is known that on a pseudo-Riemannian manifold $M$ ($\dim M=2n
\geq 4$) the conformal invariant Weyl tensor has the form
\begin{equation}\label{Weyl}
W(R)=R-\frac{1}{2(n-1)}\big
\{\psi_{1}(\rho)-\frac{\tau}{2n-1}\pi_{1}\big \}.
\end{equation}

Let $L$ be a K\"ahler curvature-like tensor on an almost complex
manifold with Norden metric $(M,J,g)$, $\dim M=2n\geq 6$. Then the
Bochner tensor $B(L)$ for $L$ is defined by \cite{Gan-Gri-Mih2}:
\begin{equation}\label{Bochner}
\begin{array}{l}
B(L)= L -
\frac{1}{2(n-2)}\big\{\psi_{1}-\psi_{2}\big\}\big(\rho(L)\big)\medskip\\
\phantom{B(L)}+
\frac{1}{4(n-1)(n-2)}\big\{\tau(L)\big(\pi_{1}-\pi_{2}\big) +
\tau^{\ast}(L)\pi_{3}\big\}.
\end{array}
\end{equation}

\section{Complex Connections on $\mathcal{W}_1$-manifolds}

\begin{definition}[\cite{Ko-No}]\label{def-complex} \emph{A linear connection $\nabla^{\prime}$ on
an almost complex manifold $(M,J)$ is said to be} almost complex
\emph{if $\nabla^{\prime}J=0$.}
\end{definition}

We introduce an eight-parametric family of complex connections in
the following
\begin{theorem} On a $\mathcal{W}_1$-manifold with Norden metric
there exists an eight-parametric family of complex connections
$\nabla^{\prime}$ defined by

\begin{equation}\label{2-1}
\begin{array}{l}
\nabla_x^{\prime}y = \nabla_x y + Q(x,y),
\end{array}
\end{equation}
where the deformation tensor $Q(x,y)$ is given by
\begin{equation}\label{2-2}
\begin{array}{l}
Q(x,y)= \frac{1}{2n}\left[\theta(Jy)x-g(x,y)J\Omega\right]\medskip\\
\hspace{0.065in}\phantom{Q(x,y)}+
\frac{1}{n}\left\{\lambda_1\theta(x)y+\lambda_2\theta(x)Jy
+\lambda_3\theta(Jx)y +\lambda_4\theta(Jx)Jy\right.\medskip\\
\phantom{Q(x,y)}\hspace{0.06in}+\lambda_5\left[\theta(y)x-\theta(Jy)Jx\right]
+
\lambda_6\left[\theta(y)Jx+\theta(Jy)x\right]\medskip\\
\left.\hspace{0.06in}\phantom{Q(x,y)}+\lambda_7\left[g(x,y)\Omega-g(x,Jy)J\Omega\right]+\lambda_8\left[g(x,Jy)\Omega+g(x,y)J\Omega\right]\right\},
\end{array}
\end{equation}
$\lambda_i\in \mathbb{R}$, $i=1,2,...,8$.
\end{theorem}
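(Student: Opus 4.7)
The plan is to verify directly that the connection $\nabla'$ defined by $\nabla'_x y = \nabla_x y + Q(x,y)$ is almost complex for every choice of the parameters $\lambda_1,\dots,\lambda_8\in\mathbb R$. Since $Q$ is $C^\infty(M)$-bilinear in $(x,y)$, the difference $\nabla'-\nabla$ is a $(1,2)$-tensor, so $\nabla'$ is automatically a linear connection; only the condition $\nabla' J=0$ requires proof. Writing this out with the formula $(\nabla'_x J)y=\nabla'_x(Jy)-J(\nabla'_x y)$ and using $\nabla'_x=\nabla_x+Q(x,\cdot)$, it reduces to the single identity
\begin{equation}\label{key}
Q(x,Jy)-JQ(x,y)=-(\nabla_x J)y.
\end{equation}

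The right-hand side of \eqref{key} can be computed explicitly on a $\mathcal W_1$-manifold. Starting from $F(x,y,z)=g((\nabla_x J)y,z)$ together with (\ref{w1}), and using the $J$-purity of $g$ (so that $\theta(w)=g(w,\Omega)$ and $\theta(Jw)=g(w,J\Omega)$), I would raise the last index to obtain
\begin{equation*}
(\nabla_x J)y=\frac{1}{2n}\bigl[\theta(y)x+\theta(Jy)Jx+g(x,y)\Omega+g(x,Jy)J\Omega\bigr].
\end{equation*}
Thus \eqref{key} becomes a very explicit algebraic identity in $x,y,\Omega,\theta$ that has to match term by term.

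The left-hand side I would split according to the structure of $Q$ in (\ref{2-2}). The idea, which is really the whole content of the theorem, is that the eight vector fields
\begin{equation*}
\theta(x)y,\ \theta(x)Jy,\ \theta(Jx)y,\ \theta(Jx)Jy,\ \theta(y)x-\theta(Jy)Jx,\ \theta(y)Jx+\theta(Jy)x,
\end{equation*}
\begin{equation*}
g(x,y)\Omega-g(x,Jy)J\Omega,\qquad g(x,Jy)\Omega+g(x,y)J\Omega,
\end{equation*}
attached to $\lambda_1,\dots,\lambda_8$ are each \emph{$J$-equivariant in their second slot}, i.e.\ they satisfy $T_i(x,Jy)=J\,T_i(x,y)$. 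Verifying this is a short calculation per term, using only $J^2=-\mathrm{id}$ and the $J$-purity of $g$; once established, every $\lambda_i$-contribution to $Q(x,Jy)-JQ(x,y)$ cancels identically. Hence only the universal piece $\frac{1}{2n}\bigl[\theta(Jy)x-g(x,y)J\Omega\bigr]$ of $Q$ survives, and a direct computation shows that it produces precisely $-(\nabla_x J)y$, matching the formula displayed above.

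There is no conceptual obstacle here; the calculations are routine and local in nature. The only point requiring care is the bookkeeping for the eight $J$-equivariance checks and the matching of signs in $J(J\Omega)=-\Omega$ when comparing the $\frac{1}{2n}$-term against $-(\nabla_x J)y$. Once those are carried out, the theorem follows for arbitrary $\lambda_1,\dots,\lambda_8\in\mathbb R$, which is exactly why the family is eight-parametric.
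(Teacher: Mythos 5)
Your proof is correct and is essentially the verification the paper compresses into one line: the paper's proof merely asserts that $(\nabla'_xJ)y=\nabla'_xJy-J\nabla'_xy=0$ follows from (\ref{w1}), (\ref{2-1}) and (\ref{2-2}). Your reduction to the identity $Q(x,Jy)-JQ(x,y)=-(\nabla_xJ)y$, the observation that each of the eight $\lambda_i$-terms satisfies $T_i(x,Jy)=JT_i(x,y)$ and hence drops out, and the check that the universal $\tfrac{1}{2n}$-term produces exactly $-(\nabla_xJ)y$ (using $(\nabla_xJ)y=\tfrac{1}{2n}[\theta(y)x+\theta(Jy)Jx+g(x,y)\Omega+g(x,Jy)J\Omega]$, which is the correct metric dual of (\ref{w1})) are precisely the details behind that assertion, and they all check out.
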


\begin{proof} By (\ref{w1}), (\ref{2-1}) and (\ref{2-2}) we verify that
$(\nabla^{\prime}_xJ)y = \nabla^{\prime}_xJy -
J\nabla^{\prime}_xy=0$, and hence the connections $\nabla^{\prime}$
are complex for any $\lambda_i\in \mathbb{R}$, $i=1,2,...,8$.
\end{proof}

Let us remark that the two-parametric family of complex connections
obtained for $\lambda_1=\lambda_4$, $\lambda_3=-\lambda_2$,
$\lambda_5=\lambda_7=0$, $\lambda_8=-\lambda_6=\frac{1}{4}$, is
studied in \cite{Teo3}.

Let $T^{\prime}$ be the torsion tensor of $\nabla^{\prime}$, i.e.
$T^{\prime}(x,y)=\nabla^{\prime}_xy - \nabla^{\prime}_xy - [x,y]$.
Taking into account that the Levi-Civita connection $\nabla$ is
symmetric, we have $T^{\prime}(x,y)=Q(x,y)-Q(y,x)$. Then by
(\ref{2-2}) we obtain
\begin{equation}\label{T}
\begin{array}{l}
T^{\prime}(x,y)=\frac{1}{n}\left\{\left(\lambda_1-\lambda_5\right)\left[\theta(x)y-\theta(y)x\right]
+\left(\lambda_2-\lambda_6\right)\left[\theta(x)Jy-\theta(y)Jx\right]\right.\medskip\\
\qquad\quad\hspace{0.02in}\left.+\left(\lambda_3-\lambda_6-\frac{1}{2}\right)\left[\theta(Jx)y-\theta(Jy)x\right]
+\left(\lambda_4+\lambda_5\right)\left[\theta(Jx)Jy-\theta(Jy)Jx\right]
\right\}.
\end{array}
\end{equation}

It is easy to verify the following
\begin{equation}
\underset{x,y,z}{\mathfrak{S}}T^{\prime}(x,y,z)=\underset{x,y,z}{\mathfrak{S}}T^{\prime}(Jx,Jy,z)=\underset{x,y,z}{\mathfrak{S}}T^{\prime}(x,y,Jz)=0,
\end{equation}
where $\mathfrak{S}$ is the cyclic sum by the arguments $x,y,z$.

Next, we obtain necessary and sufficient conditions for the complex
connections $\nabla^{\prime}$ to be symmetric (i.e. $T^{\prime}=0$).
\begin{theorem}
The complex connections $\nabla^{\prime}$ defined by (\ref{2-1}) and
(\ref{2-2}) are symmetric on a $\mathcal{W}_1$-manifold with Norden
metric if and only if $\lambda_1=-\lambda_4=\lambda_5$,
$\lambda_2=\lambda_3-\frac{1}{2}=\lambda_6$.
\end{theorem}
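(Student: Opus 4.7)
The plan is to read both implications directly off the torsion formula (\ref{T}), which expresses $T'(x,y)$ as a linear combination of the four bilinear tensor fields
\[
A_1(x,y)=\theta(x)y-\theta(y)x,\quad A_2(x,y)=\theta(x)Jy-\theta(y)Jx,
\]
\[
A_3(x,y)=\theta(Jx)y-\theta(Jy)x,\quad A_4(x,y)=\theta(Jx)Jy-\theta(Jy)Jx,
\]
with coefficients $\frac{1}{n}(\lambda_1-\lambda_5)$, $\frac{1}{n}(\lambda_2-\lambda_6)$, $\frac{1}{n}(\lambda_3-\lambda_6-\tfrac12)$, $\frac{1}{n}(\lambda_4+\lambda_5)$, respectively. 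The whole theorem thus reduces to the linear algebra statement that these four coefficients must all vanish.

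For sufficiency, I would simply substitute $\lambda_1=-\lambda_4=\lambda_5$ and $\lambda_2=\lambda_3-\tfrac12=\lambda_6$ into (\ref{T}); each of the four brackets then has coefficient zero and $T'\equiv 0$.

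For necessity, I would work pointwise at a point $p$ where $\theta\ne 0$ (on a genuine $\mathcal{W}_1$-manifold, i.e.\ not Kählerian, such a point exists; in the degenerate case $\theta\equiv 0$ the connection is already symmetric and the statement is vacuous, a remark I would add explicitly). Using the non-degeneracy of $g$ and the relation $\theta\circ J=\theta^\ast$, I can pick a vector $x_1$ with $\theta(x_1)=1$, $\theta(Jx_1)=0$, and then a second vector $y$ with $\theta(y)=\theta(Jy)=0$ that is linearly independent from $x_1$ (in $\dim M\geq 4$ the annihilator of $\theta$ and $\theta^\ast$ has dimension $\geq 2n-2\geq 2$). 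For such $(x_1,y)$ the torsion reduces to $\frac{1}{n}\{(\lambda_1-\lambda_5)y+(\lambda_2-\lambda_6)Jy\}$, and since $y, Jy$ are linearly independent this forces $\lambda_1=\lambda_5$ and $\lambda_2=\lambda_6$. Repeating with a vector $x_2$ satisfying $\theta(x_2)=0$, $\theta(Jx_2)=1$ and the same type of $y$ yields $\lambda_3-\lambda_6-\tfrac12=0$ and $\lambda_4+\lambda_5=0$.

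The only subtle point is ensuring the existence of the test vectors $x_1,x_2,y$ with the prescribed values of $\theta$ and $\theta^\ast$; this is the ``main obstacle'' but it is resolved by the fact that the two $1$-forms $\theta$ and $\theta^\ast=\theta\circ J$ are linearly independent at $p$ whenever $\Omega\neq 0$ there (since $J\Omega$ is linearly independent of $\Omega$), together with a dimension count in the tangent space. Combining the four scalar equations gives exactly $\lambda_1=-\lambda_4=\lambda_5$ and $\lambda_2=\lambda_3-\tfrac12=\lambda_6$, completing the proof.
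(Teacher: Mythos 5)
Your proposal is correct and follows the same route the paper implicitly takes: the theorem is stated as an immediate consequence of the torsion formula (\ref{T}), and you simply make explicit the linear-independence argument (via test vectors exploiting the independence of $\Omega$ and $J\Omega$) showing that all four coefficients must vanish. Your remark that the necessity direction requires $\theta\not\equiv 0$ (i.e.\ excludes the K\"ahler case $\mathcal{W}_0\subset\mathcal{W}_1$, where $T'$ vanishes for all $\lambda_i$) is a legitimate caveat the paper glosses over, but it does not affect the substance of the argument.
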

Then, by putting $\lambda_1=-\lambda_4=\lambda_5=\mu_1$,
$\lambda_2=\lambda_6=\lambda_3-\frac{1}{2}=\mu_2$,
$\lambda_7=\mu_3$, $\lambda_8=\mu_4$ in (\ref{2-2}) we obtain a
four-parametric family of complex symmetric connections
$\nabla^{\prime\prime}$ on a $\mathcal{W}_1$-manifold which are
defined by
\begin{equation}\label{sym}
\begin{array}{l}
\nabla^{\prime\prime}_x y= \nabla_x y +
\frac{1}{2n}\left[\theta(Jx)y+\theta(Jy)x-g(x,y)J\Omega\right]\medskip\\
\phantom{\nabla^{\prime}_x
y}+\frac{1}{n}\left\{\mu_1\left[\theta(x)y+\theta(y)x-\theta(Jx)Jy-\theta(Jy)Jx\right]\right.\medskip\\
\phantom{\nabla^{\prime}_x
y}+\mu_2\left[\theta(Jx)y+\theta(Jy)x+\theta(x)Jy+\theta(y)Jx\right]\medskip\\
\phantom{\nabla^{\prime}_x
y}+\left.\mu_3\left[g(x,y)\Omega-g(x,Jy)J\Omega\right]+\mu_4\left[g(x,Jy)\Omega+g(x,y)J\Omega\right]\right\}.
\end{array}
\end{equation}
The well-known Yano connection \cite{Ya1,Ya2} on a
$\mathcal{W}_1$-manifold is obtained from (\ref{sym}) for
$\mu_1=\mu_3=0$, $\mu_4=-\mu_2=\frac{1}{4}$.

\begin{definition}[\cite{Gan-Mih}]\label{def-nat} \emph{A linear connection $\nabla^{\prime}$ on
an almost complex manifold with Norden metric $(M,J,g)$ is said to
be} natural \emph{if $\nabla^{\prime}J=\nabla^{\prime}g=0$ (or
equivalently, $\nabla^{\prime}g=\nabla^{\prime}\widetilde{g}=0$).}
\end{definition}

From (\ref{2-1}) and (\ref{2-2}) we compute the covariant
derivatives of $g$ and $\widetilde{g}$ with respect to the complex
connections $\nabla^{\prime}$ as follows
\begin{equation}\label{2-5}
\begin{array}{l}
\left(\nabla^{\prime}_x g\right)(y,z)=-Q(x,y,z)-Q(x,z,y)\medskip\\
=-\frac{1}{n}\left\{
2\left[\lambda_1\theta(x)g(y,z)+\lambda_2\theta(x)g(y,Jz)+\lambda_3\theta(Jx)g(y,z)\right.\right.\medskip \\
\left.+\lambda_4\theta(Jx)g(y,Jz)\right]+(\lambda_5+\lambda_7)\left[\theta(y)g(x,z)+\theta(z)g(x,y)\right.\medskip\\
\left.-\theta(Jy)g(x,Jz)-\theta(Jz)g(x,Jy)\right]+(\lambda_6+\lambda_8)\left[\theta(y)g(x,Jz)\right.\medskip\\
\left.+\theta(z)g(x,Jy)+\theta(Jy)g(x,z)+\theta(Jz)g(x,y)\right]\left.\right\},\medskip\\
\left(\nabla^{\prime}_x
\widetilde{g}\right)(y,z)=-Q(x,y,Jz)-Q(x,Jz,y).
\end{array}
\end{equation}
Then, by (\ref{2-5}) we get the following

\begin{theorem}
The complex connections $\nabla^{\prime}$ defined by (\ref{2-1}) and
(\ref{2-2}) are natural on a $\mathcal{W}_1$-manifold if and only if
$\lambda_1=\lambda_2=\lambda_3=\lambda_4=0$, $\lambda_7=-\lambda_5$,
$\lambda_8=-\lambda_6$.
\end{theorem}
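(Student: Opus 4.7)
The plan is to work directly from formula (\ref{2-5}), which already expresses $(\nabla'_x g)(y,z)$ as a linear combination — with coefficients depending linearly on the $\lambda_i$ — of six explicit $(0,3)$-tensors built from $\theta$, $g$, and $J$. Since the preceding theorem has already established $\nabla' J = 0$ for every choice of parameters, naturality in the sense of Definition \ref{def-nat} reduces to the single condition $\nabla' g = 0$: the identity $(\nabla'_x\widetilde{g})(y,z)=(\nabla'_x g)(y,Jz)$ follows at once from $\nabla' J = 0$, so $\nabla'\widetilde{g}=0$ is automatic. Thus the whole problem is to decide when the right-hand side of the first line of (\ref{2-5}) vanishes identically.

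Reading off (\ref{2-5}), the six tensor terms are $\theta(x)g(y,z)$, $\theta(x)g(y,Jz)$, $\theta(Jx)g(y,z)$, $\theta(Jx)g(y,Jz)$ with respective coefficients $2\lambda_1,\,2\lambda_2,\,2\lambda_3,\,2\lambda_4$, together with two symmetrized expressions carrying coefficients $\lambda_5+\lambda_7$ and $\lambda_6+\lambda_8$. The main step is to show that on a $\mathcal{W}_1$-manifold with $\theta\not\equiv 0$ these six tensor fields are pointwise linearly independent; granted this, $\nabla' g = 0$ forces each coefficient to vanish, giving exactly $\lambda_1=\lambda_2=\lambda_3=\lambda_4=0$, $\lambda_7=-\lambda_5$, $\lambda_8=-\lambda_6$, and the converse direction is a one-line substitution back into (\ref{2-5}).

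For the independence, at a point where $\theta\neq 0$ I would first note that the 1-forms $\theta$ and $\theta\circ J$ are linearly independent, since $\Omega$ and $J\Omega$ are linearly independent whenever $\Omega\neq 0$ (else $J^2\Omega=-\Omega$ would have to be a scalar multiple of $\Omega$, contradicting $J^2=-I$ over $\mathbb{R}$). I would then isolate the four ``$x$-slot'' coefficients by choosing $y=z=v$ with $v$ orthogonal to both $\Omega$ and $J\Omega$, which kills the symmetrized expressions and reduces the identity to $\lambda_1\theta(x)g(v,v)+\lambda_2\theta(x)\widetilde{g}(v,v)+\lambda_3\theta(Jx)g(v,v)+\lambda_4\theta(Jx)\widetilde{g}(v,v)=0$; varying $v$ between a direction with $g(v,v)\neq 0=\widetilde{g}(v,v)$ and one with $\widetilde{g}(v,v)\neq 0=g(v,v)$ (both exist since $g$ and $\widetilde{g}$ are neutral) and then varying $x$, the four parameters $\lambda_1,\ldots,\lambda_4$ separate. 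Plugging $y=z=\Omega$ back into the remaining identity and again using independence of $\theta$ and $\theta\circ J$ yields $\lambda_5+\lambda_7=0$ and $\lambda_6+\lambda_8=0$.

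The main obstacle is this linear-independence bookkeeping, which should be done carefully by the choice of test vectors; the remainder is purely formal substitution. One minor caveat worth noting: on a genuinely K\"ahler manifold with Norden metric ($\theta\equiv 0$) the deformation tensor $Q$ vanishes identically, so every $\nabla'$ in the family already coincides with $\nabla$ and is trivially natural; the ``only if'' assertion is therefore to be understood on the non-K\"ahler stratum, where $\theta$ is somewhere nonzero.
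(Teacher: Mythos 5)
Your overall strategy coincides with the paper's: the previous theorem gives $\nabla'J=0$ for every choice of parameters, so by Definition~\ref{def-nat} naturality reduces to $\nabla' g=0$ (your observation that $(\nabla'_x\widetilde g)(y,z)=(\nabla'_x g)(y,Jz)$ once $\nabla'J=0$ is correct), and the conditions are then read off from (\ref{2-5}). The paper offers no more justification than ``by (\ref{2-5}) we get the following,'' so the pointwise linear independence of the six tensor expressions in (\ref{2-5}) is precisely the content you are right to try to supply, and the sufficiency direction is indeed a one-line substitution.

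However, your independence argument has a genuine gap: both of your test-vector choices presuppose a nondegeneracy that a $\mathcal{W}_1$-manifold need not provide. The vectors $v$ with $\theta(v)=\theta(Jv)=0$ form the $g$-orthogonal complement of $\mathrm{span}\{\Omega,J\Omega\}$, and the restriction of $g$ there can be degenerate: the Gram determinant of $\{\Omega,J\Omega\}$ is $-\theta(\Omega)^2-\theta(J\Omega)^2$, so whenever $\theta(\Omega)=\theta(J\Omega)=0$ (with $\Omega\neq 0$) that plane is totally isotropic and lies inside its own complement. This is exactly what happens in the paper's own four-dimensional example of Section~5, where by (\ref{22}) the complement \emph{equals} $\mathrm{span}\{\Omega,J\Omega\}$, every admissible $v$ satisfies $g(v,v)=g(v,Jv)=0$, and your reduced identity carries no information about $\lambda_1,\dots,\lambda_4$; the claim that suitable $v$ ``exist since $g$ and $\widetilde g$ are neutral'' concerns the whole tangent space, not the restriction to the complement. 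Your final step is also affected: with $y=z=\Omega$ the surviving identity is $(\lambda_5+\lambda_7)\left[\theta(\Omega)\theta(x)-\theta(J\Omega)\theta(Jx)\right]+(\lambda_6+\lambda_8)\left[\theta(\Omega)\theta(Jx)+\theta(J\Omega)\theta(x)\right]=0$, which is vacuous in the same degenerate case, in any dimension. The theorem itself is safe --- the six tensors are linearly independent wherever $\theta\neq 0$ --- but one must either use test arguments with $y\neq z$ (e.g.\ $y$ non-orthogonal to $\Omega$, $z$ in the complement) or, more cleanly, contract (\ref{2-5}) over pairs of arguments with $g^{ij}$: contracting over $y,z$ gives $\left[4n\lambda_1+4(\lambda_5+\lambda_7)\right]\theta(x)+\left[4n\lambda_3+4(\lambda_6+\lambda_8)\right]\theta(Jx)=0$, and together with the other contractions and the pointwise independence of $\theta$ and $\theta\circ J$ (which you do establish correctly) this forces all six coefficients to vanish with no nondegeneracy hypothesis on $\mathrm{span}\{\Omega,J\Omega\}$. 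As written, your argument fails on the very manifold the paper exhibits as its example.
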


If we put $\lambda_8=-\lambda_6=s$, $\lambda_7=-\lambda_5=t$,
$\lambda_i=0$, $i=1,2,3,4$, in (\ref{2-2}), we obtain a
two-parametric family of natural connections
$\nabla^{\prime\prime\prime}$ defined by
\begin{equation}\label{nabla-n}
\begin{array}{l}
\nabla^{\prime\prime\prime}_x y = \nabla_x y
+\frac{1-2s}{2n}\left[\theta(Jy)x-g(x,y)J\Omega\right]
+\frac{1}{n}\left\{s\left[g(x,Jy)\Omega -
\theta(y)Jx\right]\right.\medskip\\
\phantom{\nabla^{\prime}_x y}\left.+t\left[g(x,y)\Omega -
g(x,Jy)J\Omega - \theta(y)x+\theta(Jy)Jx\right]\right\}.
\end{array}
\end{equation}
The well-known canonical connection \cite{Gan-Mih} (or
$B$-connection \cite{Gan-Gri-Mih2}) on a $\mathcal{W}_1$-manifold
with Norden metric is obtained from (\ref{nabla-n}) for
$s=\frac{1}{4}$, $t=0$.

We give a summery of the obtained results in the following table
\begin{center}
\begin{tabular}{|l|c|c|}
  \hline
  Connection type & Symbol  & Parameters \\ \hline
  $\begin{array}{l}\text{Complex}\end{array}$ & $\nabla^{\prime}$ & $\lambda_i\in\mathbb{R}$, $i=1,2,...,8$. \\
  \hline
  $\begin{array}{l}\text{Complex}\smallskip\\ \text{symmetric}\end{array}$ & $\nabla^{\prime\prime}$ & $\begin{array}{c}\mu_i,\hspace{0.03in} i=1,2,3,4, \smallskip\\
  \mu_1=\lambda_1=-\lambda_4=\lambda_5,\hspace{0.03in}
\mu_2=\lambda_2=\lambda_6=\lambda_3-\frac{1}{2},\smallskip\\
\mu_3=\lambda_7,\hspace{0.03in} \mu_4=\lambda_8\end{array}$ \\
\hline
  $\begin{array}{l}\text{Natural}\end{array}$ & $\nabla^{\prime\prime\prime}$ & $\begin{array}{c}s,t,\smallskip\\
  s=\lambda_8=-\lambda_6, \hspace{0.03in} t = \lambda_7=-\lambda_5,\smallskip\\
  \lambda_i=0,\hspace{0.03in} i =1,2,3,4.\end{array}$ \\
  \hline
\end{tabular}

\end{center}

\medskip

Our next aim is to study the curvature properties of the complex
connections $\nabla^{\prime}$. Let us first consider the natural
connection $\nabla^0$ obtained from (\ref{nabla-n}) for $s=t=0$,
i.e.
\begin{equation}\label{nabla-0}
\nabla^{0}_x y = \nabla_x y + \frac{1}{2n}\left[\theta(Jy)x -
g(x,y)J\Omega\right].
\end{equation}
This connection is a semi-symmetric metric connection, i.e. a
connection of the form $\nabla_x y + \omega(y)x - g(x,y)U$, where
$\omega$ is a 1-form and $U$ is the corresponding vector of
$\omega$, i.e. $\omega(x)=g(x,U)$. Semi-symmetric metric connections
are introduced in \cite{Ha} and studied in \cite{Im,Ya3}. The form
of the curvature tensor of an arbitrary connection of this type is
obtained in \cite{Ya3}. The geometry of such connections on almost
complex manifolds with Norden metric is considered in \cite{Si}.

Let us denote by $R^0$ the curvature tensor of $\nabla^0$, i.e.
$R^0(x,y)z=\nabla^0_x\nabla^0_y z - \nabla^0_y\nabla^0_x z -
\nabla^0_{[x,y]}z$. The corresponding tensor of type (0,4) is
defined by $R^0(x,y,z,u)=g(R^0(x,y,)z,u)$. According to \cite{Ya3}
it is valid

\begin{proposition}\label{2}
On a $\mathcal{W}_1$-manifold with closed 1-form $\theta^{\ast}$ the
K\"ahler curvature tensor $R^0$ of $\nabla^0$ defined by
(\ref{nabla-0}) has the form
\begin{equation}\label{R0}
R^0=R - \frac{1}{2n}\psi_1(P),
\end{equation}
where
\begin{equation}\label{R01}
P(x,y)=\left(\nabla_x \theta\right)Jy +
\frac{1}{2n}\theta(x)\theta(y)+\frac{\theta(\Omega)}{4n}g(x,y)+\frac{\theta(J\Omega)}{2n}g(x,Jy).
\end{equation}
\end{proposition}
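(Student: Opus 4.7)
The plan is to recognise $\nabla^0$ as a semi-symmetric metric connection, apply Yano's classical curvature formula for such connections, and then match the resulting deformation tensor to $\frac{1}{2n}P$.

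First I would rewrite the defining formula (\ref{nabla-0}) in the canonical form
\[
\nabla^0_x y = \nabla_x y + \omega(y)\,x - g(x,y)\,U, \qquad \omega = \frac{1}{2n}\theta^\ast,\quad U = \frac{1}{2n}J\Omega,
\]
and verify $\omega(x)=g(x,U)$ using $g(Jx,y)=g(x,Jy)$, which follows from (\ref{11}). The hypothesis $\mathrm{d}\theta^\ast = 0$ becomes $\mathrm{d}\omega=0$, so Yano's formula \cite{Ya3} gives
\[
R^0 = R - \psi_1(\alpha), \qquad \alpha(x,y) = (\nabla_x\omega)y - \omega(x)\omega(y) + \frac{1}{2}\omega(U)g(x,y),
\]
with $\alpha$ symmetric. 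The proof then reduces to showing $\alpha = \frac{1}{2n}P$.

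For the computation of $\alpha$ I would dualise (\ref{w1}) to the vector identity
\[
(\nabla_x J)y = \frac{1}{2n}\bigl[g(x,y)\Omega + g(x,Jy)J\Omega + \theta(y)x + \theta(Jy)Jx\bigr],
\]
and expand $(\nabla_x\omega)y = \frac{1}{2n}\bigl[(\nabla_x\theta)(Jy) + \theta((\nabla_x J)y)\bigr]$. This produces four quadratic-in-$\theta$ terms. The summand $\frac{1}{(2n)^2}\theta(Jx)\theta(Jy)$ cancels exactly against $-\omega(x)\omega(y)$, and $\omega(U)=-\frac{1}{(2n)^2}\theta(\Omega)$ (via $J^2=-1$) reduces the coefficient of $g(x,y)\theta(\Omega)$ from $\frac{1}{(2n)^2}$ to $\frac{1}{2(2n)^2}=\frac{1}{2n}\cdot\frac{1}{4n}$. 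The remaining summands $g(x,Jy)\theta(J\Omega)$ and $\theta(x)\theta(y)$ already carry the factor $\frac{1}{(2n)^2}=\frac{1}{2n}\cdot\frac{1}{2n}$; pulling out an overall $\frac{1}{2n}$ yields exactly $P$ as defined in (\ref{R01}).

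Two small checks remain. To see that $\psi_1(P)$ is curvature-like, the symmetry of $P$ must be verified: the three algebraic summands are obviously symmetric in $x,y$, and for $(\nabla_x\theta)(Jy)$ one writes $(\nabla_x\theta^\ast)y=(\nabla_x\theta)(Jy)+\theta((\nabla_x J)y)$ and notes that $\theta((\nabla_x J)y)$ is already $x\leftrightarrow y$ symmetric by the formula above, so $\mathrm{d}\theta^\ast=0$ transfers symmetry to $(\nabla_x\theta)(Jy)$. The K\"ahler property of $R^0$ is proved independently: $\nabla^0$ is the $\lambda_i=0$ specialisation of Theorem~3.1, hence $\nabla^0 J=0$, which gives $R^0(x,y)Jz=JR^0(x,y)z$ and therefore $R^0(x,y,Jz,Ju)=-R^0(x,y,z,u)$ via $g(Ja,Jb)=-g(a,b)$. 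The main obstacle is purely the bookkeeping in the expansion of $\alpha$: keeping track of the two nested factors of $\frac{1}{2n}$ (one in $\omega$, one in $(\nabla_x J)y$) and confirming that the Yano corrections $-\omega(x)\omega(y)$ and $\frac{1}{2}\omega(U)g(x,y)$ are precisely those needed to eliminate the spurious $\theta(Jx)\theta(Jy)$ term and halve the $\theta(\Omega)g(x,y)$ coefficient; nothing conceptually deeper is required.
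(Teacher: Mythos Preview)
Your approach is correct and is precisely the one the paper takes: the paper does not prove this proposition in detail but simply states that it holds ``according to \cite{Ya3}'', i.e.\ by recognising $\nabla^0$ as a semi-symmetric metric connection and invoking Yano's curvature formula. Your write-up faithfully supplies the computation the paper omits, and the bookkeeping with the $\frac{1}{2n}$ factors, the cancellation of $\theta(Jx)\theta(Jy)$, and the halving of the $\theta(\Omega)g(x,y)$ coefficient are all correct.
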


Since the Weyl tensor $W(\psi_1(S))=0$ (where $S$ is a symmetric
(0,2)-tensor), from (\ref{R0}) and (\ref{R01}) we conclude that
\begin{equation}\label{WW}
W(R^0)=W(R).
\end{equation}
Thus, the last equality implies
\begin{proposition}\label{thW}
Let $(M,J,g)$ be a $\mathcal{W}_1$-manifold with closed 1-form
$\theta^{\ast}$, and $\nabla^0$ be the natural connection defined by
(\ref{nabla-0}). Then, the Weyl tensor is invariant by the
transformation $\nabla \rightarrow \nabla^0$.
\end{proposition}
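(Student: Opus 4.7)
The proposition is essentially a direct corollary of the chain of equalities already recorded just above its statement, so my plan is to spell out why (\ref{WW}) already does all the work. First I would invoke Proposition \ref{2}, whose hypothesis (closed $\theta^{\ast}$) matches the present one, to write
\begin{equation*}
R^0 = R - \frac{1}{2n}\psi_1(P),
\end{equation*}
with $P$ given by (\ref{R01}).

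Next I would check that $P$ is symmetric. The terms $\theta(x)\theta(y)$, $g(x,y)$ and $g(x,Jy)=\widetilde{g}(x,y)$ are manifestly symmetric (recall that $\widetilde{g}$ is a Norden metric, hence symmetric), and the remaining term $(\nabla_x\theta)Jy$ is symmetric in $x,y$ by the hypothesis $\mathrm{d}\theta^{\ast}=0$ combined with the $\mathcal{W}_1$ structure equation (\ref{w1}) for $\nabla J$. Consequently $\psi_1(P)$ is a curvature-like tensor of exactly the type to which the standard conformal identity applies.

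Finally I would apply the (linear) Weyl operator $W$ from (\ref{Weyl}) to both sides of (\ref{R0}). The classical identity $W(\psi_1(S))=0$ for every symmetric $(0,2)$-tensor $S$ (which is built into the construction of the pseudo-Riemannian Weyl tensor and is noted in the text immediately before (\ref{WW})) makes the correction term vanish, leaving $W(R^0)=W(R)$. This last equality is precisely the invariance of the Weyl tensor under the transformation $\nabla\to\nabla^{0}$ that the proposition asserts.

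There is no real obstacle in this argument: the only point where any genuine calculation might be needed is the verification that $(\nabla_x\theta)Jy$ is symmetric in $x,y$ under the closed-form hypothesis on $\theta^{\ast}$, but this is a short consequence of the $\mathcal{W}_1$ identity; everything else is linear algebra applied to the already established formula (\ref{R0}).
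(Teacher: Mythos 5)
Your argument is exactly the paper's: it applies the identity $W(\psi_1(S))=0$ for symmetric $S$ to the decomposition $R^0=R-\frac{1}{2n}\psi_1(P)$ from Proposition \ref{2} to get $W(R^0)=W(R)$, which is precisely equation (\ref{WW}) that the paper states just before the proposition. Your added verification that $P$ is symmetric (using $\mathrm{d}\theta^{\ast}=0$ together with the $\mathcal{W}_1$ identity (\ref{w1})) is correct and is the only detail the paper leaves implicit.
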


Further in this section we study the curvature properties of the
complex connections $\nabla^{\prime}$ defined by (\ref{2-1}) and
(\ref{2-2}). Let us denote by $R^{\prime}$ the curvature tensors
corresponding to these connections.

If a linear connection $\nabla^\prime$ and the Levi-Civita
connection $\nabla$ are related by an equation of the form
(\ref{2-1}), then, because of $\nabla g=0$, their curvature tensors
of type (0,4) satisfy
\begin{equation}\label{33}
\begin{array}{l}
g(R^\prime(x,y)z,u) = R(x,y,z,u) + (\nabla_x Q)(y,z,u) - (\nabla_y
Q)(x,z,u)\medskip\\
\phantom{g(R^\prime(x,y)z,u)} + Q(x,Q(y,z),u) - Q(y,Q(x,z),u),
\end{array}
\end{equation}
where $Q(x,y,z) = g(Q(x,y),z)$.

Then, by (\ref{2-1}), (\ref{2-2}), (\ref{R0}), (\ref{R01}),
(\ref{33}) we obtain the relation between $R^{\prime}$ and $R^0$ as
follows
\begin{equation}\label{h1}
\begin{array}{l}
R^{\prime}(x,y,z,u) = R^{0}(x,y,z,u) +g(y,z)A_1(x,u) -
g(x,z)A_1(y,u)\medskip\\ + g(x,u)A_2(y,z) - g(y,u)A_2(x,z)
-g(y,Jz)A_1(x,Ju)\medskip\\ +
g(x,Jz)A_1(y,Ju)-g(x,Ju)A_2(y,Jz)+g(y,Ju)A_2(x,Jz)\medskip\\
+\left[\frac{\lambda_5\lambda_7 -
\lambda_6\lambda_8}{n^2}\theta(\Omega) + \frac{\lambda_7 - \lambda_5
+ 2(\lambda_5\lambda_8+\lambda_6\lambda_7)}{2n^2}\theta(J\Omega)
\right]\{\pi_1-\pi_2\}(x,y,z,u)\medskip\\
-\left[\frac{\lambda_5\lambda_8 +
\lambda_6\lambda_7}{n^2}\theta(\Omega) - \frac{\lambda_6 - \lambda_8
+
2(\lambda_5\lambda_7-\lambda_6\lambda_8)}{2n^2}\theta(J\Omega)\right]\pi_3(x,y,z,u),
\end{array}
\end{equation}
where
\begin{equation}\label{h2}
\begin{array}{l}
A_1(x,y) =
\frac{\lambda_7}{n}\left\{\left(\nabla_x\theta\right)y+\frac{\lambda_7}{n}[\theta(x)\theta(y)-\theta(Jx)\theta(Jy)]\right\}\medskip\\
\phantom{A_1(x,y)}+\frac{\lambda_8}{n}\left\{\left(\nabla_x\theta\right)Jy
+
\frac{1-2\lambda_8}{2n}[\theta(x)\theta(y)-\theta(Jx)\theta(Jy)]\right\}\medskip\\
\phantom{A_1(x,y)}+\frac{\lambda_7(4\lambda_8-1)}{2n^2}[\theta(x)\theta(Jy)+\theta(Jx)\theta(y)],\bigskip\\
A_2(x,y)=-\frac{\lambda_5}{n}\left\{\left(\nabla_x\theta\right)y -
\frac{\lambda_5}{n}[\theta(x)\theta(y)-\theta(Jx)\theta(Jy)]\right\}\medskip\\
\phantom{A_2(x,y)}-\frac{\lambda_6}{n}\left\{\left(\nabla_x\theta\right)Jy+\frac{1+2\lambda_6}{2n}[\theta(x)\theta(y)-\theta(Jx)\theta(Jy)]\right\}\medskip\\
\phantom{A_2(x,y)}+\frac{\lambda_5(4\lambda_6+1)}{2n^2}[\theta(x)\theta(Jy)+\theta(Jx)\theta(y)].
\end{array}
\end{equation}
We are interested in necessary and sufficient conditions for
$R^{\prime}$ to be a K\"ahler curvature-like tensor, i.e. to satisfy
(\ref{L}) and (\ref{Ka}). From (\ref{psi}), (\ref{h1}) and
(\ref{h2}) it follows that $R^{\prime}$ is K\"ahlerian if and only
if $A_1(x,y)=A_2(x,y)$. Hence we obtain

\begin{theorem}\label{1}
Let $(M,J,g)$ be a conformal K\"ahler manifold with Norden metric,
and $\nabla^{\prime}$ be the complex connection defined by
(\ref{2-1}) and (\ref{2-2}). Then, $R^{\prime}$ is a K\"ahler
curvature-like tensor if and only if $\lambda_7=-\lambda_5$ and
$\lambda_8=-\lambda_6$. In this case, from (\ref{2-1}) and
(\ref{2-2}) we obtain a six-parametric family of complex connections
$\nabla^{\prime}$ whose curvature tensors $R^{\prime}$ have the form
\begin{equation}
\begin{array}{l}\label{Rpr}
R^{\prime} = R^0 + \frac{\lambda_7}{n}\left\{\psi_1 -
\psi_2\right\}(S_1) +
\frac{\lambda_8}{n}\left\{\psi_1-\psi_2\right\}(S_2)
\medskip\\ \phantom{R^{\prime}}+
\frac{\lambda_7(4\lambda_8-1)}{2n^2}\left\{\psi_1-\psi_2\right\}(S_3)
+
\frac{\lambda_7(1-2\lambda_8)\theta(J\Omega)}{n^2}\left\{\pi_1-\pi_2\right\}\medskip\\
\phantom{R^{\prime}}+\frac{2\lambda_7\lambda_8\theta(\Omega)}{n^2}\pi_3,
\end{array}
\end{equation}
where $R^0$ is given by (\ref{R0}), (\ref{R01}), and
\begin{equation}\label{111}
\begin{array}{l}
S_1(x,y) = \left(\nabla_x\theta\right)y +
\frac{\lambda_7}{n}[\theta(x)\theta(y)-\theta(Jx)\theta(Jy)] -
\frac{\lambda_7\theta(\Omega)}{2n}g(x,y) \medskip\\
\phantom{S_1(x,y)} +
\frac{\lambda_7\theta(J\Omega)}{2n}g(x,Jy),\medskip\\
S_2(x,y) = \left(\nabla_x\theta\right)Jy +
\frac{1-2\lambda_8}{2n}[\theta(x)\theta(y)-\theta(Jx)\theta(Jy)]\medskip\\
\phantom{S_2(x,y)}+\frac{\lambda_8\theta(\Omega)}{2n}g(x,y) +
\frac{(1-\lambda_8)\theta(J\Omega)}{2n}g(x,Jy),\medskip\\
S_3(x,y) = \theta(x)\theta(Jy) + \theta(Jx)\theta(y).
\end{array}\end{equation}
\end{theorem}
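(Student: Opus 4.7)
The plan is to exploit the identity (\ref{h1})--(\ref{h2}), already established. Proposition~\ref{2} gives that $R^0$ is K\"ahler curvature-like, and the last two summands of (\ref{h1}) are scalar multiples of $\pi_1-\pi_2$ and $\pi_3$, which are K\"ahlerian by the discussion following (\ref{psi}). Hence $R^{\prime}$ satisfies (\ref{L}) and (\ref{Ka}) if and only if the middle block
\begin{align*}
T(x,y,z,u) &= g(y,z)A_1(x,u) - g(x,z)A_1(y,u) + g(x,u)A_2(y,z) - g(y,u)A_2(x,z) \\
&\quad - g(y,Jz)A_1(x,Ju) + g(x,Jz)A_1(y,Ju) - g(x,Ju)A_2(y,Jz) + g(y,Ju)A_2(x,Jz)
\end{align*}
is itself K\"ahler curvature-like.

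For the characterisation I would argue as follows. Since $\nabla^{\prime}J=0$ and $g(Jv,w)=g(v,Jw)$ by (\ref{11}), every complex connection automatically satisfies $R^{\prime}(x,y,Jz,Ju)=-R^{\prime}(x,y,z,u)$, so (\ref{Ka}) comes for free; only the antisymmetry in $(z,u)$ required by (\ref{L}) is substantive. A direct computation of $T(x,y,z,u)+T(x,y,u,z)$ produces a tensorial expression linear in $B:=A_1-A_2$ (with $g$- and $\widetilde{g}$-type coefficients), whose vanishing on all arguments forces $B\equiv 0$. Expanding $B$ from (\ref{h2}) and matching the linearly independent coefficients of $(\nabla_x\theta)y$ and $(\nabla_x\theta)Jy$ then forces $\lambda_7=-\lambda_5$ and $\lambda_8=-\lambda_6$; a short check confirms that the two remaining quadratic-in-$\theta$ coefficients of $B$ cancel automatically under these substitutions.

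For the explicit formula I would substitute $\lambda_5=-\lambda_7$ and $\lambda_6=-\lambda_8$ into (\ref{h1})--(\ref{h2}). Then $A_1=A_2=:A$, and by (\ref{psi}) the block $T$ collapses to $\{\psi_1-\psi_2\}(A)$. The conformal K\"ahler condition (\ref{cK}) makes $(\nabla_x\theta)y$ and $(\nabla_x\theta)Jy$ both symmetric and hybrid in $x,y$, hence $A$ is symmetric and hybrid and $\{\psi_1-\psi_2\}(A)$ is a genuine K\"ahler curvature-like tensor. The final step is to split $A$ as $\tfrac{\lambda_7}{n}S_1+\tfrac{\lambda_8}{n}S_2+\tfrac{\lambda_7(4\lambda_8-1)}{2n^2}S_3$ modulo scalar multiples of $g$ and $\widetilde{g}$; using $\{\psi_1-\psi_2\}(g)=2(\pi_1-\pi_2)$ and $\{\psi_1-\psi_2\}(\widetilde{g})=-2\pi_3$, these leftover multiples are absorbed into the scalar tails of (\ref{h1}), and after cancellation one obtains precisely the coefficients $\tfrac{\lambda_7(1-2\lambda_8)\theta(J\Omega)}{n^2}$ and $\tfrac{2\lambda_7\lambda_8\theta(\Omega)}{n^2}$ of $\pi_1-\pi_2$ and $\pi_3$ in (\ref{Rpr}); this also dictates the corrective $-\tfrac{\lambda_7\theta(\Omega)}{2n}g+\tfrac{\lambda_7\theta(J\Omega)}{2n}\widetilde{g}$ terms in $S_1$ and the analogous $\lambda_8$-terms in $S_2$ recorded in (\ref{111}).

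The main obstacle is the bookkeeping in this last step: terms of the form $\theta(\Omega)g$ and $\theta(J\Omega)\widetilde{g}$ arise simultaneously from the expansion of $\{\psi_1-\psi_2\}(A)$ and from the scalar tails of (\ref{h1}), and one has to track their signs and coefficients carefully so that exactly the correct combinations survive and the corrective pieces of $S_1,S_2$ emerge with the stated weights. Once $A$ is algebraically unified into the form above, the verification of (\ref{Rpr}) and (\ref{111}) reduces to a routine regrouping.
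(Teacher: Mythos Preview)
Your proposal is correct and follows the same route as the paper: the paper's entire argument is the single sentence preceding the theorem, namely that from (\ref{psi}), (\ref{h1}) and (\ref{h2}) one sees $R'$ is K\"ahler curvature-like if and only if $A_1=A_2$, and then (\ref{Rpr})--(\ref{111}) are obtained by substitution and regrouping. Your write-up simply unpacks this, and your observation that (\ref{Ka}) is automatic for any complex connection (so that only the $(z,u)$--antisymmetry in (\ref{L}) is the active constraint, with the first Bianchi identity following a posteriori once $A_1=A_2$ gives $T=\{\psi_1-\psi_2\}(A)$ for a symmetric hybrid $A$) is a helpful clarification the paper leaves implicit.
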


From (\ref{Rpr}), (\ref{111}) and Theorem \ref{1} we get

\begin{corollary}\label{cor}
Let $(M,J,g)$ be a conformal K\"ahler manifold with Norden metric
and $\nabla^{\prime}$ be the eight-parametric family of complex
connections defined by (\ref{2-1}) and (\ref{2-2}). Then
$R^{\prime}=R^0$ if and only if $\lambda_i=0$ for $i=5,6,7,8$.
\end{corollary}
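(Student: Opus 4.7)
My plan is to treat the two directions of the ``if and only if'' separately, using the explicit formulas (\ref{h1})--(\ref{h2}) for the sufficiency and the compact expression (\ref{Rpr}) coming from Theorem~\ref{1} for the necessity.

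For the sufficiency direction I would simply substitute $\lambda_5=\lambda_6=\lambda_7=\lambda_8=0$ into (\ref{h2}): every summand of $A_1$ and $A_2$ carries at least one of these four parameters, so $A_1\equiv 0$ and $A_2\equiv 0$. Inspecting the last two lines of (\ref{h1}) shows that the coefficients of $\{\pi_1-\pi_2\}$ and $\pi_3$ are polynomial expressions in $\lambda_5,\lambda_6,\lambda_7,\lambda_8$ without constant term and hence also vanish, so (\ref{h1}) collapses to $R'=R^0$.

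For the necessity direction the key observation is that, by Proposition~\ref{2}, $R^0$ is a K\"ahler curvature tensor, so the hypothesis $R'=R^0$ forces $R'$ to satisfy (\ref{Ka}). Theorem~\ref{1} then yields $\lambda_7=-\lambda_5$ and $\lambda_8=-\lambda_6$, after which the compact formula (\ref{Rpr}) becomes available. Imposing $R'=R^0$ in (\ref{Rpr}) gives the tensor identity
\begin{equation*}
\begin{array}{l}
\frac{\lambda_7}{n}\{\psi_1-\psi_2\}(S_1) + \frac{\lambda_8}{n}\{\psi_1-\psi_2\}(S_2) + \frac{\lambda_7(4\lambda_8-1)}{2n^2}\{\psi_1-\psi_2\}(S_3)\\
\quad + \frac{\lambda_7(1-2\lambda_8)\theta(J\Omega)}{n^2}\{\pi_1-\pi_2\} + \frac{2\lambda_7\lambda_8\theta(\Omega)}{n^2}\pi_3 = 0.
\end{array}
\end{equation*}
Expanding $S_1$ and $S_2$ via (\ref{111}) and isolating the pieces that are linear in $\nabla\theta$---which are contributed only by the first two summands, with coefficients $\lambda_7/n$ and $\lambda_8/n$ respectively---forces $\lambda_7=\lambda_8=0$, and the K\"ahler relations from Theorem~\ref{1} then close out $\lambda_5=\lambda_6=0$.

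The main obstacle I foresee is the linear-independence argument in the last step: a priori the $(\nabla_x\theta)y$ and $(\nabla_x\theta)Jy$ contributions to $\{\psi_1-\psi_2\}(S_1)$ and $\{\psi_1-\psi_2\}(S_2)$ could in principle be absorbed into the algebraic-in-$\theta$ tail of the identity. The cleanest remedy is to Ricci-contract both sides via (\ref{Ricci, tao}); the derivative pieces then survive as independent $(0,2)$-tensors proportional to $(\nabla_x\theta)y$ and $(\nabla_x\theta)Jy$ whose coefficients $\lambda_7$ and $\lambda_8$ must vanish separately, while the remaining terms depend algebraically only on $\theta$ and fall away automatically once $\lambda_7=\lambda_8=0$.
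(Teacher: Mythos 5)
Your sufficiency direction is fine and is exactly what the paper intends: the paper derives the corollary directly from (\ref{Rpr}), (\ref{111}) and Theorem \ref{1} without writing out an argument, and your observation that every term of $A_1$, $A_2$ and of the $\{\pi_1-\pi_2\}$- and $\pi_3$-coefficients in (\ref{h1}) carries a factor from $\{\lambda_5,\lambda_6,\lambda_7,\lambda_8\}$ settles that half. Likewise, passing through Theorem \ref{1} to get $\lambda_7=-\lambda_5$, $\lambda_8=-\lambda_6$ before invoking (\ref{Rpr}) is the right order of operations for the converse.

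The genuine gap is the one you yourself flag at the end, and the Ricci-contraction remedy does not close it. Contracting the identity $\frac{\lambda_7}{n}\{\psi_1-\psi_2\}(S_1)+\frac{\lambda_8}{n}\{\psi_1-\psi_2\}(S_2)+\cdots=0$ via (\ref{Ricci, tao}) turns it into a $(0,2)$-tensor identity of the shape $(2n-2)\bigl[\tfrac{\lambda_7}{n}S_1+\tfrac{\lambda_8}{n}S_2+\cdots\bigr]+(\text{trace terms})\,g+(\text{trace terms})\,\widetilde g=0$, so you still need the tensors $\nabla\theta$, $(\nabla\theta)\circ J$, $\theta\otimes\theta$, $g$, $\widetilde g$ to be linearly independent on the given manifold, which is not guaranteed by the hypothesis ``conformal K\"ahler with Norden metric.'' Indeed the paper's own example in Section 5 is a counterexample to the necessity as a pointwise statement: there $S_1=S_2=S_3=0$ and $\theta(\Omega)=\theta(J\Omega)=0$, so $R^{\prime}=R^0$ $(=0)$ for the entire six-parameter family with $\lambda_7=-\lambda_5$, $\lambda_8=-\lambda_6$, not only for $\lambda_5=\lambda_6=\lambda_7=\lambda_8=0$. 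The ``only if'' can therefore only be established in the sense in which the paper implicitly means it --- namely that the correction terms in (\ref{Rpr}) vanish identically, as expressions in the structure tensors of a generic conformal K\"ahler manifold, precisely when $\lambda_i=0$ for $i=5,6,7,8$ --- and your write-up should either state this reading explicitly or add a genericity hypothesis; no contraction trick will prove the literal pointwise claim.
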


Let us remark that by putting $\lambda_i=0$ for $i=1,2,5,6,7,8$ in
(\ref{2-2}) we obtain a two-parametric family of complex connections
whose K\"ahler curvature tensors coincides with $R^0$ on a
$\mathcal{W}_1$-manifold with closed 1-form $\theta^{\ast}$.

Theorem \ref{thW} and Corollary \ref{cor} imply

\begin{corollary}
On a conformal K\"ahler manifold with Norden metric the Weyl tensor
is invariant by the transformation of the Levi-Civita connection in
any of the complex connection $\nabla^{\prime}$ defined by
(\ref{2-1}) and (\ref{2-2}) for $\lambda_i=0$, $i=5,6,7,8$.
\end{corollary}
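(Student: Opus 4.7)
The plan is to deduce this corollary directly by composing Proposition~\ref{thW} with Corollary~\ref{cor}, both of which have already been established in the paper. The only thing that needs to be checked is that the hypotheses of Proposition~\ref{thW} are automatically satisfied on a conformal K\"ahler manifold.

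First, I would recall from Section~2 that a conformal K\"ahler manifold with Norden metric is by definition a $\mathcal{W}_1$-manifold on which both Lie forms $\theta$ and $\theta^{\ast}$ are closed. In particular $\mathrm{d}\theta^{\ast}=0$, so Proposition~\ref{thW} applies and yields $W(R^0)=W(R)$, where $\nabla^0$ is the semi-symmetric natural connection defined in (\ref{nabla-0}). Morally, this is the place where the hypothesis bites: the difference $R-R^0$ in (\ref{R0}) is built from $\psi_1$ of a symmetric $(0,2)$-tensor, and such tensors are annihilated by the Weyl operator.

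Next, I would invoke Corollary~\ref{cor}: whenever $\lambda_5=\lambda_6=\lambda_7=\lambda_8=0$ in (\ref{2-2}), the curvature tensor of the associated complex connection $\nabla^{\prime}$ reduces to $R^{\prime}=R^0$ (all of the correction terms in (\ref{Rpr})--(\ref{111}) carry an explicit factor of $\lambda_7$ or $\lambda_8$, and under the K\"ahlerian constraint $\lambda_7=-\lambda_5$, $\lambda_8=-\lambda_6$ of Theorem~\ref{1} the vanishing of $\lambda_7,\lambda_8$ forces the vanishing of $\lambda_5,\lambda_6$ as well). Consequently $W(R^{\prime})=W(R^0)$, and chaining the two equalities gives $W(R^{\prime})=W(R)$ for every choice of the remaining free parameters $\lambda_1,\ldots,\lambda_4\in\mathbb{R}$, which is precisely the assertion.

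There is no genuine obstacle here: all the substantive computation has already been absorbed into the derivation of (\ref{R0}) and of (\ref{Rpr})--(\ref{111}). The corollary is a one-line composition of Proposition~\ref{thW} and Corollary~\ref{cor}, so the proof amounts to verifying the hypothesis $\mathrm{d}\theta^{\ast}=0$ (trivial on a conformal K\"ahler manifold) and citing the two prior results in the correct order.
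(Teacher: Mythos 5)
Your proof is correct and follows exactly the paper's own route: the paper derives this corollary by the single line ``Theorem \ref{thW} and Corollary \ref{cor} imply,'' i.e.\ by chaining $W(R)=W(R^{0})$ with $R^{\prime}=R^{0}$ for $\lambda_{5}=\lambda_{6}=\lambda_{7}=\lambda_{8}=0$, just as you do. Your additional remark that the hypothesis $\mathrm{d}\theta^{\ast}=0$ holds by the very definition of a conformal K\"ahler manifold is a worthwhile explicit check that the paper leaves implicit.
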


Since for the Bochner tensor of $\{\psi_1-\psi_2\}(S)$ it is valid
$B\left(\{\psi_1-\psi_2\}(S)\right)=0$, where $S$ is symmetric and
hybrid with respect to $J$, from Theorem \ref{1} and (\ref{psi}) it
follows
\begin{equation}\label{BR0}
B(R^{\prime}) = B(R^0).
\end{equation}
By this way we proved the truthfulness of the following
\begin{theorem}\label{thB}
Let $(M,J,g)$ be a conformal K\"ahler manifold with Norden metric,
$R^{\prime}$ be the curvature tensor of $\nabla^{\prime}$ defined by
(\ref{2-1}) and (\ref{2-2}) for $\lambda_7=-\lambda_5$,
$\lambda_8=-\lambda_6$, and $R^0$ be the curvature tensor of
$\nabla^0$ given by (\ref{nabla-0}). Then the Bochner tensor is
invariant by the transformations
$\nabla^0\rightarrow\nabla^{\prime}$.
\end{theorem}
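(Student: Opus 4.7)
The plan is to combine the decomposition (\ref{Rpr}) of $R'$ from Theorem \ref{1} with the linearity of the map $L\mapsto B(L)$ on K\"ahler curvature-like tensors, thereby reducing the statement to checking that every correction term in $R'-R^{0}$ is annihilated by $B$.

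The central ingredient, which I would isolate as a lemma, is the assertion that $B\bigl(\{\psi_{1}-\psi_{2}\}(S)\bigr)=0$ whenever $S$ is a symmetric $(0,2)$-tensor hybrid with respect to $J$. To prove this I would perform the standard contractions using (\ref{psi}), the identity $\tr J=0$, and the relation $S(Jx,Jy)=-S(x,y)$ implied by symmetry and hybridity. These produce the Ricci and scalar curvatures of $L:=\{\psi_{1}-\psi_{2}\}(S)$ as linear combinations of $S$, $g$, $\widetilde{g}$, $\tr_{g}S$ and $\tr^{*}S:=g^{ij}S(e_{i},Je_{j})$, with the coefficients $2(n-2)$ and $4(n-1)$ arising naturally. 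Substituting into (\ref{Bochner}) and using the identities $\{\psi_{1}-\psi_{2}\}(g)=2(\pi_{1}-\pi_{2})$ and $\{\psi_{1}-\psi_{2}\}(\widetilde{g})=-2\pi_{3}$ (immediate from (\ref{psi})), the $\{\psi_{1}-\psi_{2}\}(\rho)$ contribution should cancel both $L$ itself and the $\tau$, $\tau^{*}$ scalar-curvature terms, yielding $B(L)=0$.

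The second step is bookkeeping against the explicit form (\ref{Rpr}). One must verify that each of $S_{1},S_{2},S_{3}$ in (\ref{111}) is symmetric and hybrid: for $S_{1}$ and $S_{2}$, symmetry reduces precisely to the two conformal K\"ahler conditions (\ref{cK}), while the quadratic terms in $\theta$ and the $g$, $\widetilde{g}$ pieces are manifestly symmetric, and hybridity follows analogously using $J^{2}=-I$ and the symmetry of $\widetilde{g}$; the tensor $S_{3}$ is manifestly symmetric and hybrid by direct inspection. Finally, $\pi_{1}-\pi_{2}$ and $\pi_{3}$ themselves fit the lemma via the two displayed relations above (with $S=g$ and $S=\widetilde{g}$, respectively). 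Consequently every summand of $R'-R^{0}$ listed in (\ref{Rpr}) lies in the kernel of $B$, so $B(R')=B(R^{0})$.

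The main obstacle I expect is the algebraic identity in the lemma: the cancellation depends delicately on the constants $2(n-2)$ and $4(n-1)(n-2)$ in (\ref{Bochner}) matching the trace coefficients produced by $\rho(L)$, and the contractions involving $\widetilde{g}$ (using $g^{ij}g(e_{i},Je_{j})=\tr J=0$) must be executed carefully. Once the lemma is in hand, the remainder is a direct unpacking of Theorem \ref{1} together with the conformal K\"ahler hypothesis (\ref{cK}).
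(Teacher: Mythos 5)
Your proposal is correct and follows essentially the same route as the paper: the paper likewise deduces $B(R^{\prime})=B(R^{0})$ from the decomposition (\ref{Rpr}) together with the fact that $B\bigl(\{\psi_{1}-\psi_{2}\}(S)\bigr)=0$ for $S$ symmetric and hybrid with respect to $J$, the only difference being that the paper cites this vanishing (and the K\"ahlerian nature of $\pi_{1}-\pi_{2}$ and $\pi_{3}$) as known rather than re-deriving it by contraction as you propose.
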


\section{Conformal transformations of complex connections}

In this section we study usual conformal transformations of the
complex connections $\nabla^{\prime}$ defined in the previous
section.

Let $(M,J,g)$ and $(M,J,\bar{g})$ be conformally equivalent almost
complex manifolds with Norden metric by the transformation
(\ref{conf}). It is known that the Levi-Civita connections $\nabla$
and $\overline{\nabla}$ of $g$ and $\overline{g}$, respectively, are
related as follows
\begin{equation}\label{con-trans1}
\overline{\nabla}_{x}y = \nabla_{x}y + \sigma(x)y + \sigma(y)x -
g(x,y)\Theta,
\end{equation}
where $\sigma(x)=du(x)$ and $\Theta=\textrm{grad}\hspace{0.02in}
\sigma$, i.e. $\sigma(x)=g(x,\Theta)$. Let $\overline{\theta}$ be
the Lie 1-form of $(M,J,\overline{g})$. Then by (\ref{1-3}) and
(\ref{con-trans1}) we obtain
\begin{equation}\label{theta-bar}
\overline{\theta} = \theta + 2n\big(\sigma\circ J\big),\qquad\quad
\overline{\Omega}=e^{-2u}\big(\Omega + 2nJ\Theta\big).
\end{equation}

It is valid the following
\begin{lemma}\label{lemma1}
Let $(M,J,g)$ be an almost complex manifold with Norden metric and
$(M,J,\overline{g})$ be its conformally equivalent manifold by the
transformation (\ref{conf}). Then, the curvature tensors $R$ and
$\overline{R}$ of $\nabla$ and $\overline{\nabla}$, respectively,
are related as follows
\begin{equation}\label{Rbar}
\overline{R}=e^{2u}\big\{R-\psi_{1}\big(V\big) -
\pi_{1}\sigma\big(\Theta\big)\big\},
\end{equation}
where $V(x,y)=\big(\nabla_{x}\sigma\big)y - \sigma(x)\sigma(y)$.
\end{lemma}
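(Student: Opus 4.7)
The plan is to apply the general formula (\ref{33}) with the specific deformation tensor dictated by the conformal transformation. From (\ref{con-trans1}), write
\[
Q(x,y)=\sigma(x)y+\sigma(y)x-g(x,y)\Theta,
\]
so that $\overline{\nabla}_x y=\nabla_x y+Q(x,y)$. Equation (\ref{33}) then gives
\[
g\bigl(\overline{R}(x,y)z,u\bigr)=R(x,y,z,u)+(\nabla_x Q)(y,z,u)-(\nabla_y Q)(x,z,u)+Q(x,Q(y,z),u)-Q(y,Q(x,z),u).
\]
Since $\overline{g}=e^{2u}g$, the final step will simply be to multiply by $e^{2u}$; thus the heart of the proof is to massage the right-hand side into $e^{-2u}\bigl\{R-\psi_1(V)-\sigma(\Theta)\pi_1\bigr\}$.

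First I would compute $(\nabla_x Q)(y,z)$. Using $\nabla g=0$ and the identity $(\nabla_x\sigma)(u)=g(\nabla_x\Theta,u)$ (which follows from $\sigma(\cdot)=g(\cdot,\Theta)$), a direct expansion yields
\[
(\nabla_x Q)(y,z,u)=(\nabla_x\sigma)(y)g(z,u)+(\nabla_x\sigma)(z)g(y,u)-g(y,z)(\nabla_x\sigma)(u).
\]
Subtracting the analogous expression with $x$ and $y$ swapped and invoking $(\nabla_x\sigma)(y)=(\nabla_y\sigma)(x)$ (valid since $\sigma=du$ is closed), the first terms cancel, leaving a four-term expression of Kulkarni--Nomizu shape.

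Next I would compute $Q(x,Q(y,z))$. A clean computation uses $Q(x,\Theta)=\sigma(\Theta)x$ (since the $\sigma(x)\Theta$ terms cancel against $g(x,\Theta)\Theta=\sigma(x)\Theta$), and after expansion and pairing with $u$, subtraction of the $x\leftrightarrow y$ version gives
\[
\sigma(z)\bigl[\sigma(y)g(x,u)-\sigma(x)g(y,u)\bigr]+\sigma(u)\bigl[\sigma(x)g(y,z)-\sigma(y)g(x,z)\bigr]-\sigma(\Theta)\pi_1(x,y,z,u),
\]
using $\pi_1(x,y,z,u)=g(y,z)g(x,u)-g(x,z)g(y,u)$ from (\ref{psi}).

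Finally, adding the contribution from $(\nabla_x Q)-(\nabla_y Q)$ and collecting $\sigma(\cdot)\sigma(\cdot)$ terms with $(\nabla\sigma)$ terms reveals the combination $(\nabla_x\sigma)(y)-\sigma(x)\sigma(y)=V(x,y)$ appearing in precisely the pattern of $-\psi_1(V)$ (recall $V$ is symmetric since $d\sigma=0$, so $\psi_1(V)$ is well defined). This yields
\[
g\bigl(\overline{R}(x,y)z,u\bigr)=R(x,y,z,u)-\psi_1(V)(x,y,z,u)-\sigma(\Theta)\pi_1(x,y,z,u),
\]
and multiplication by $e^{2u}=\overline{g}/g$ produces (\ref{Rbar}). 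The only real obstacle is the bookkeeping in the $Q\circ Q$ step: one must carefully track which $\sigma(x)\Theta$ terms collapse via $\sigma(\Theta)$ and ensure the surviving terms assemble into the symmetric combination $V$ rather than its non-symmetric half.
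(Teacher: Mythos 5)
Your proof is correct. The paper itself states Lemma \ref{lemma1} without proof (it is a standard conformal-transformation formula quoted from the literature), but your argument is exactly the natural verification using the paper's own identity (\ref{33}) with $Q(x,y)=\sigma(x)y+\sigma(y)x-g(x,y)\Theta$: the first-order terms collapse to $-\psi_{1}(\nabla\sigma)$ via closedness of $\sigma=du$, the quadratic terms assemble into $\psi_{1}(\sigma\otimes\sigma)-\sigma(\Theta)\pi_{1}$ (I checked the bookkeeping, including the cancellation $Q(x,\Theta)=\sigma(\Theta)x$ and the identity $\pi_{1}(x,y,z,u)=g(y,z)g(x,u)-g(x,z)g(y,u)$), and the two $\psi_{1}$ contributions combine into $-\psi_{1}(V)$ with $V=\nabla\sigma-\sigma\otimes\sigma$ symmetric; the final factor $e^{2u}$ comes from lowering the last index with $\overline{g}$ rather than $g$.
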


Let us first study the conformal group of the natural connection
$\nabla^0$ given by (\ref{nabla-0}). Equalities (\ref{nabla-0}) and
(\ref{con-trans1}) imply that its conformal group is defined
analytically by
\begin{equation}\label{nabla0-bar}
\overline{\nabla}^{\hspace{0.02in}0}_{x}y = \nabla^{0}_{x}y +
\sigma(x)y.
\end{equation}
It is known that if two linear connections are related by an
equation of the form (\ref{nabla0-bar}), where $\sigma$ is a 1-form,
then the curvature tensors of these connections coincide iff
$\sigma$ is closed. Hence, it is valid
\begin{theorem}\label{thR0}
Let $(M,J,g)$ be a $\mathcal{W}_1$-manifold with closed 1-form
$\theta^{\ast}$. Then the curvature tensor $R^0$ of $\nabla^0$ is
conformally invariant, i.e.
\begin{equation}
\overline{R}^0 = e^{2u}R^0.
\end{equation}
\end{theorem}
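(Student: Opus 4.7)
The plan is to exploit the observation made immediately before the theorem: whenever two linear connections differ by a tensor of the form $T(x,y)=\sigma(x)y$, their $(1,3)$-type curvature tensors differ by $d\sigma(x,y)z$. Since in our setting $\sigma=du$ is exact, hence closed, this forces the $(1,3)$-curvatures of $\nabla^0$ and $\overline{\nabla}^{\hspace{0.02in}0}$ to coincide, and then the conformal factor $e^{2u}$ appears only when we lower an index with $\overline{g}$.

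More concretely, I would first recall the derivation of (\ref{nabla0-bar}), which is already given in the excerpt and follows by substituting (\ref{con-trans1}) and the transformation laws (\ref{theta-bar}) for $\overline{\theta}$ and $\overline{\Omega}$ into the defining formula (\ref{nabla-0}) for $\overline{\nabla}^{\hspace{0.02in}0}$. The nontrivial terms $\sigma(y)x-g(x,y)\Theta$ coming from $\overline{\nabla}$ cancel precisely against the contributions of $2n(\sigma\circ J)$ and $2nJ\Theta$ hidden inside $\overline{\theta}$ and $\overline{\Omega}$, leaving the clean expression $\overline{\nabla}^{\hspace{0.02in}0}_x y=\nabla^0_x y+\sigma(x)y$.

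Next I would compute $\overline{R}^0(x,y)z$ directly from this relation. Expanding
\begin{equation*}
\overline{\nabla}^{\hspace{0.02in}0}_x\overline{\nabla}^{\hspace{0.02in}0}_y z-\overline{\nabla}^{\hspace{0.02in}0}_y\overline{\nabla}^{\hspace{0.02in}0}_x z-\overline{\nabla}^{\hspace{0.02in}0}_{[x,y]}z,
\end{equation*}
the terms quadratic in $\sigma$ cancel by symmetry in $x,y$, and the remaining cross terms collect into $\bigl(x(\sigma(y))-y(\sigma(x))-\sigma([x,y])\bigr)z=d\sigma(x,y)z$, which is a standard computation (a translation into this context of the well-known fact for projective-type perturbations). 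Because $\sigma=du$, we have $d\sigma=0$, hence $\overline{R}^0(x,y)z=R^0(x,y)z$ as $(1,3)$-tensors.

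Finally I would pass to the $(0,4)$-type curvature via $\overline{R}^0(x,y,z,w)=\overline{g}\bigl(\overline{R}^0(x,y)z,w\bigr)$ and use $\overline{g}=e^{2u}g$ to obtain $\overline{R}^0=e^{2u}R^0$. The only real point to watch in the write-up is the cocycle computation showing that the curvature difference is exactly $d\sigma(x,y)z$; the torsion of $\nabla^0$ does not interfere because, as the Lie-bracket term $\sigma([x,y])z$ demonstrates, both connections share the same Lie bracket structure and the extra piece $T(x,y)-T(y,x)=\sigma(x)y-\sigma(y)x$ enters symmetrically in $x,y$ and is taken care of by the $\nabla^0_{[x,y]}$ term of $\overline{\nabla}^{\hspace{0.02in}0}_{[x,y]}$.
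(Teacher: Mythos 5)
Your proposal is correct and follows essentially the same route as the paper: derive the relation $\overline{\nabla}^{\hspace{0.02in}0}_{x}y=\nabla^{0}_{x}y+\sigma(x)y$ from (\ref{nabla-0}), (\ref{con-trans1}) and (\ref{theta-bar}), observe that for such a perturbation the curvature difference is $d\sigma(x,y)z$, which vanishes since $\sigma=du$ is exact, and then pick up the factor $e^{2u}$ by lowering the index with $\overline{g}$. The only difference is that you write out explicitly the cocycle computation and the cancellation in $\overline{\nabla}^{\hspace{0.02in}0}$ that the paper merely cites as known, which is a welcome but not substantively different elaboration.
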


Further in this section let $(M,J,g)$ be a conformal K\"ahler
manifold with Norden metric. Then $(M,J,\overline{g})$ is a K\"ahler
manifold and thus $\overline{\theta}=0$. From (\ref{theta-bar}) it
follows $\sigma =\frac{1}{2n}(\theta \circ J)$. Then, from
(\ref{2-1}) and (\ref{2-2}) we get
$\overline{\nabla}^{\hspace{0.02in}\prime}=\overline{\nabla}$ and
hence $\overline{R}^{\prime}=\overline{R}$ for all
$\lambda_i\in\mathbb{R}$, $i=1,2,...,8$. In particular,
$\overline{R}^\prime=\overline{R}^0$. Then, Theorems \ref{thB} and
(\ref{BR0}) imply

\begin{theorem}
On a conformal K\"ahler manifold with Norden metric the Bochner
curvature tensor of the complex connections $\nabla^{\prime}$
defined by (\ref{2-1}) and (\ref{2-2}) with the conditions
$\lambda_7=-\lambda_5$ and $\lambda_8=-\lambda_6$ is conformally
invariant by the transformation (\ref{conf}), i.e.
\begin{equation}
B(\overline{R}^{\prime})=e^{2u}B(R^{\prime}).
\end{equation}
\end{theorem}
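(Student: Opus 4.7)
The plan is to obtain the identity by chaining three results already in hand: Theorem \ref{thB} (which gives $B(R^{\prime})=B(R^0)$ on the original conformal K\"ahler manifold), Theorem \ref{thR0} (the conformal scaling $\overline{R}^0=e^{2u}R^0$), and the observation, just recorded before the statement, that on the conformally equivalent K\"ahler manifold $(M,J,\overline{g})$ all of the considered connections collapse to $\overline{\nabla}$.

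First I would dispose of the Kähler side: since $\overline{\theta}=0$, the deformation tensor in \eqref{2-2} and in \eqref{nabla-0} is identically zero, so $\overline{\nabla}^{\prime}=\overline{\nabla}^0=\overline{\nabla}$ and therefore $\overline{R}^{\prime}=\overline{R}^0$ as $(0,4)$-tensors. Consequently $B(\overline{R}^{\prime})=B(\overline{R}^0)$ trivially (or, equivalently, by Theorem \ref{thB} applied on $(M,J,\overline{g})$). This is the painless step.

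The substantive step is to verify that the Bochner operation itself scales by $e^{2u}$ on $R^0$, that is
\[
B(\overline{R}^0)=e^{2u}B(R^0).
\]
Starting from Theorem \ref{thR0}, $\overline{R}^0=e^{2u}R^0$, I would compute how each ingredient appearing in \eqref{Bochner} transforms under $\overline{g}=e^{2u}g$. Since $\overline{g}^{ij}=e^{-2u}g^{ij}$, the Ricci tensor of $\overline{R}^0$ with respect to $\overline{g}$ is unchanged, $\overline{\rho}(\overline{R}^0)=\rho(R^0)$; the scalar curvatures then satisfy $\overline{\tau}(\overline{R}^0)=e^{-2u}\tau(R^0)$ and analogously for $\tau^\ast$; from their definitions $\overline{\psi}_1,\overline{\psi}_2$ acquire a factor $e^{2u}$ (one metric slot replaced by $\overline{g}$), while $\overline{\pi}_1,\overline{\pi}_2,\overline{\pi}_3$ acquire $e^{4u}$ (two metric slots replaced). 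Substituting into \eqref{Bochner} shows that every term collects exactly the same overall factor $e^{2u}$, giving the claimed scaling.

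With these two ingredients, the theorem follows from the chain
\[
B(\overline{R}^{\prime})=B(\overline{R}^0)=e^{2u}B(R^0)=e^{2u}B(R^{\prime}),
\]
using the first step for the outer left equality, the scaling computation for the middle equality, and Theorem \ref{thB} for the outer right equality. The only real obstacle is the bookkeeping in the conformal-scaling calculation of $B$ — verifying that the $e^{2u}$, $e^{-2u}$, and $e^{4u}$ factors combine uniformly so that $B$ behaves as a weight-$1$ conformal object on $R^0$; this is a routine but careful computation, simplified considerably by the fact that $\overline{R}^0$ already scales cleanly by $e^{2u}$.
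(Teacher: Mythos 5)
Your proposal is correct and follows essentially the same route as the paper: on the conformally equivalent K\"ahler manifold $\overline{\theta}=0$ forces all the connections to collapse to $\overline{\nabla}$, so $\overline{R}^{\prime}=\overline{R}^0$, and then Theorem \ref{thR0} together with $B(R^{\prime})=B(R^0)$ gives the chain $B(\overline{R}^{\prime})=B(\overline{R}^0)=e^{2u}B(R^0)=e^{2u}B(R^{\prime})$. The only difference is that you spell out the conformal weight bookkeeping for $B$ (the $e^{-2u}$, $e^{2u}$, $e^{4u}$ factors combining to an overall $e^{2u}$), which the paper leaves implicit; your computation of those weights is accurate.
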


Let us remark that the conformal invariancy of the Bochner tensor of
the canonical connection on a conformal K\"ahler manifold with
Norden metric is proved in \cite{Gan-Gri-Mih2}.

From Theorem \ref{thR0} and Corollary \ref{cor} we obtain

\begin{corollary}
Let $(M,J,g)$ be a conformal K\"ahler manifold with Norden metric
and $\nabla^{\prime}$ be a complex connection defined by (\ref{2-1})
and (\ref{2-2}). If $\lambda_i=0$ for $i=5,6,7,8$, then the
curvature tensor of $\nabla^{\prime}$ is conformally invariant by
the transformation (\ref{conf}).
\end{corollary}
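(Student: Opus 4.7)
The plan is to chain Corollary \ref{cor} with Theorem \ref{thR0}, using the fact that a conformal K\"ahler manifold with Norden metric is by definition conformally equivalent to a K\"ahler manifold and in particular is a $\mathcal{W}_1$-manifold with closed Lie form $\theta^{\ast}$. Thus both previously established results apply to $(M,J,g)$.

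First I would invoke Corollary \ref{cor}: under the hypothesis $\lambda_i=0$ for $i=5,6,7,8$, the curvature tensor of $\nabla^{\prime}$ coincides with the curvature tensor of the natural semi-symmetric connection $\nabla^{0}$, so $R^{\prime}=R^{0}$ on $(M,J,g)$. Separately, Theorem \ref{thR0} gives
\begin{equation*}
\overline{R}^{\,0}=e^{2u}R^{0}
\end{equation*}
for any $\mathcal{W}_1$-manifold with closed $\theta^{\ast}$, and in particular for our conformal K\"ahler $(M,J,g)$.

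The remaining step is to identify $\overline{R}^{\,\prime}$ with $\overline{R}^{\,0}$ on the conformally transformed side. For this I would use that $(M,J,\overline{g})$ is K\"ahlerian, so $\overline{\theta}=0$ and consequently $\overline{\Omega}=0$. Plugging $\overline{\theta}=0$ and $\overline{\Omega}=0$ into the deformation tensor (\ref{2-2}) and into the formula (\ref{nabla-0}) makes every term on the right-hand sides vanish, hence
\begin{equation*}
\overline{\nabla}^{\,\prime}=\overline{\nabla}=\overline{\nabla}^{\,0},
\end{equation*}
and therefore $\overline{R}^{\,\prime}=\overline{R}^{\,0}$. Concatenating the three equalities yields
\begin{equation*}
\overline{R}^{\,\prime}=\overline{R}^{\,0}=e^{2u}R^{0}=e^{2u}R^{\prime},
\end{equation*}
which is the claimed conformal invariance.

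I do not anticipate any genuine obstacle: the main point is verifying that the four terms in (\ref{2-2}) that survive the restriction $\lambda_5=\lambda_6=\lambda_7=\lambda_8=0$ all involve $\theta$ or $\Omega$ linearly (so that they collapse when $\overline{\theta}=0$), which is immediate from inspection of the formula. The only bookkeeping is to note that the hypotheses of Corollary \ref{cor} and of Theorem \ref{thR0} are automatically satisfied on a conformal K\"ahler manifold with Norden metric, so no extra assumption on $(M,J,g)$ is needed beyond the one already in the statement.
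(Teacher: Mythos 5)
Your proof is correct and follows essentially the same route as the paper: it combines Corollary \ref{cor} ($R^{\prime}=R^{0}$ when $\lambda_5=\lambda_6=\lambda_7=\lambda_8=0$) with Theorem \ref{thR0} ($\overline{R}^{\,0}=e^{2u}R^{0}$), and uses the vanishing of $\overline{\theta}$ on the Kählerian image $(M,J,\overline{g})$ to conclude $\overline{\nabla}^{\,\prime}=\overline{\nabla}=\overline{\nabla}^{\,0}$, exactly as the paper does in the paragraph preceding its Bochner-invariance theorem. No gaps.
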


\section{An Example}

Let $G$ be a real connected four-dimensional Lie group, and
$\mathfrak{g}$ be its corresponding Lie algebra. If
$\{e_1,e_2,e_3,e_4\}$ is a basis of $\mathfrak{g}$, we equip $G$
with a left-invariant almost complex structure $J$ by
\begin{equation}\label{J1}
Je_1 = e_3,\qquad Je_2 = e_4,\qquad Je_3=-e_1,\qquad Je_4=-e_2.
\end{equation}
We also define a left-invariant pseudo-Riemannian metric $g$ on $G$
by
\begin{equation}\label{g1}
\begin{array}{l}
g(e_1,e_1)=g(e_2,e_2)=-g(e_3,e_3)=-g(e_4,e_4)=1,\medskip\\
g(e_i,e_j)=0,\quad i\neq j,\quad i,j=1,2,3,4.
\end{array}
\end{equation}
Then, because of (\ref{11}), (\ref{J1}) and (\ref{g1}), $(G,J,g)$ is
an almost complex manifold with Norden metric.

Further, let the Lie algebra $\mathfrak{g}$ be defined by the
following commutator relations
\begin{equation}\label{lie1}
\begin{array}{l}
[e_1,e_2]=[e_3,e_4]=0,\medskip\\
\lbrack e_1,e_4]=[e_2,e_3]=\lambda (e_1 + e_4) + \mu (e_2 - e_3),\medskip\\
\lbrack e_1,e_3]=-[e_2,e_4] = \mu (e_1 + e_4) - \lambda (e_2 - e_3),
\end{array}
\end{equation}
where $\lambda,\mu\in\mathbb{R}$.

The well-known Koszul's formula for the Levi-Civita connection of
$g$ on $G$, i.e. the equality
\begin{equation}\label{K}
2g(\nabla_{e_i} e_j,e_k) = g([e_i,e_j],e_k) +
g([e_k,e_i],e_j)+g([e_k,e_j],e_i),
\end{equation}
and (\ref{g1}) imply the following essential non-zero components of
the Levi-Civita connection:
\begin{equation}\label{nabla1}
\begin{array}{ll}
\nabla_{e_1}e_1 = \nabla_{e_2}e_2 = \mu e_3 + \lambda e_4,\quad &
\nabla_{e_3}e_3 = \nabla_{e_4}e_4 = -\lambda e_1 + \mu
e_2,\medskip\\
\nabla_{e_1}e_3 = \mu (e_1 + e_4),\quad & \nabla_{e_1}e_4 = \lambda
e_1 - \mu e_3,\medskip\\
\nabla_{e_2}e_3 = \mu e_1 + \lambda e_4,\quad & \nabla_{e_2}e_4 =
\lambda (e_2 - e_3).
\end{array}
\end{equation}

Then, by (\ref{F}), (\ref{Fp}) and (\ref{nabla1}) we compute the
following essential non-zero components $F_{ijk}=F(e_i,e_j,e_k)$ of
$F$:
\begin{equation}\label{Fijk}
\begin{array}{l}
F_{111} = F_{422} = 2\mu,\quad F_{222}=-F_{311} =
2\lambda,\medskip\\
F_{112} = -F_{214} = F_{314} = -F_{412} = \lambda,\quad
F_{212}=-F_{114}=F_{312}=-F_{414}=\mu.
\end{array}
\end{equation}

Having in mind (\ref{1-3}) and (\ref{Fijk}), the components
$\theta_i=\theta(e_i)$ and $\theta_i^\ast=\theta^\ast(e_i)$ of the
1-forms $\theta$ and $\theta^\ast$, respectively, are:
\begin{equation}\label{theta}
\theta_2=\theta_3=\theta^\ast_1=-\theta^\ast_4 =
4\lambda,\qquad\theta_1=-\theta_4=-\theta^\ast_2=-\theta^\ast_3=4\mu.
\end{equation}

By (\ref{1-3}) and (\ref{theta}) we compute
\begin{equation}\label{22}
\begin{array}{c}
\Omega = 4\mu(e_1+e_4) + 4\lambda(e_2 -e_3),\qquad J\Omega =
4\lambda(e_1 + e_4) - 4\mu(e_2 - e_3),\medskip\\ \theta (\Omega) =
\theta(J\Omega) = 0.
\end{array}
\end{equation}

By the characteristic condition (\ref{w1}) and equalities
(\ref{Fijk}), (\ref{theta}) we prove that the manifold $(G,J,g)$
with Lie algebra $\mathfrak{g}$ defined by (\ref{lie1}) belongs to
the basic class $\mathcal{W}_1$. Moreover, by (\ref{nabla1}) and
(\ref{theta}) it follows that the conditions (\ref{cK}) hold and
thus
\begin{proposition}
The manifold $(G,J,g)$ defined by (\ref{J1}), (\ref{g1}) and
(\ref{lie1}) is a conformal K\"ahler manifold with Norden metric.
\end{proposition}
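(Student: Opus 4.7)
The plan is to verify the two defining conditions of the class $\mathcal{W}_1^{\hspace{0.01in}0}$ in turn: first that $(G,J,g)$ belongs to the basic class $\mathcal{W}_1$, and then that the Lie 1-forms $\theta$ and $\theta^\ast$ are closed, which by the preliminaries is equivalent to the symmetry conditions (\ref{cK}). Both checks reduce to finite component-wise verifications against the data (\ref{J1}), (\ref{g1}), (\ref{nabla1}), (\ref{Fijk}), (\ref{theta}), so the proof is computational rather than conceptual.

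For the first step, I would substitute the metric from (\ref{g1}), the structure $J$ from (\ref{J1}), and the 1-form components from (\ref{theta}) into the right-hand side of (\ref{w1}) with $n=2$, evaluated on each triple $(e_i,e_j,e_k)$, and compare with the components $F_{ijk}$ listed in (\ref{Fijk}). It is enough to match the essential non-zero components, since both sides inherit the symmetries (\ref{Fp}); the factor $\frac{1}{2n}=\frac14$ pairs with the values $\theta_i\in\{\pm 4\lambda,\pm 4\mu\}$ so that each $F_{ijk}$ comes out as the expected linear combination of $\lambda$ and $\mu$, and the identity is verified case by case.

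For the second step, I exploit the fact that, for the left-invariant basis, the numbers $\theta_i$ in (\ref{theta}) are constants, so
$(\nabla_{e_i}\theta)e_j = e_i\bigl(\theta(e_j)\bigr) - \theta(\nabla_{e_i}e_j) = -\theta(\nabla_{e_i}e_j)$, and analogously for $(\nabla_{e_i}\theta)Je_j$ using (\ref{J1}). I would then read off $\nabla_{e_i}e_j$ directly from (\ref{nabla1}) and contract with $\theta$ from (\ref{theta}), obtaining a $4\times 4$ matrix of values whose symmetry in $i,j$ yields the first equation of (\ref{cK}), and similarly for the second. Together with membership in $\mathcal{W}_1$ proved in the previous paragraph, this places $(G,J,g)$ in $\mathcal{W}_1^{\hspace{0.01in}0}$ and the proposition follows.

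The only real obstacle is bookkeeping. There are six unordered pairs $(i,j)$ to handle, the sign conventions in (\ref{J1}) must be tracked carefully when evaluating $(\nabla_{e_i}\theta)Je_j$, and the $\lambda,\mu$ cross-terms appear symmetrically in a way that is easy to misread. No genuine geometric difficulty is expected, since the Levi-Civita components in (\ref{nabla1}) were themselves derived via Koszul's formula (\ref{K}) and are tailored to the commutators (\ref{lie1}) to make exactly this verification straightforward.
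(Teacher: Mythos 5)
Your proposal is correct and follows the same route as the paper: verify membership in $\mathcal{W}_1$ by matching the components (\ref{Fijk}) against the characteristic identity (\ref{w1}) with $n=2$ using (\ref{theta}), and then check the conditions (\ref{cK}) from (\ref{nabla1}) and (\ref{theta}). Your observation that left-invariance makes $(\nabla_{e_i}\theta)e_j=-\theta(\nabla_{e_i}e_j)$ is a sound way to organize the second check and is exactly the computation the paper leaves implicit.
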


According to (\ref{nabla-0}), (\ref{g1}), (\ref{nabla1}) and
(\ref{theta}) the components of the natural connection $\nabla^0$
are given by
\begin{equation}\label{nabla-01}
\begin{array}{ll}
\nabla_{e_1}^0 e_1 = - \nabla_{e_4}^0 e_1 = \mu e_2,\qquad &
\nabla_{e_2}^0 e_1 = \nabla_{e_3}^0 e_1 = \lambda e_2,\medskip\\
\nabla_{e_1}^0 e_2 = -\nabla_{e_4}^0 e_2 = -\mu e_1,\qquad &
\nabla_{e_2}^0 e_2 = \nabla_{e_3}^0 e_2 = - \lambda e_1,\medskip\\
\nabla_{e_1}^0 e_3 =-\nabla_{e_4}^0 e_3 = \mu e_4,\qquad &
\nabla_{e_2}^0 e_3 = \nabla_{e_3}^0 e_3 = \lambda e_4,\medskip\\
\nabla_{e_1}^0 e_4 = -\nabla_{e_4}^0 e_4 = -\mu e_3,\qquad &
\nabla_{e_2}^0 e_4 = \nabla_{e_4}^0 e_4 = - \lambda e_3.
\end{array}
\end{equation}
By (\ref{nabla-01}) we obtain $R^0=0$. Then, by (\ref{R0}) and
(\ref{R01}) the curvature tensor $R$ of $(G,J,g)$ has the form
\begin{equation}
R = \frac{1}{4}\psi_1(A),\qquad A(x,y) = (\nabla_x\theta)Jy +
\frac{1}{4}\theta(x)\theta(y).
\end{equation}

Moreover, having in mind (\ref{111}), (\ref{nabla1}), (\ref{theta})
and (\ref{22}), we compute $S_1=S_2=S_3=0$. Hence, for the tensors
$R^\prime$ of the complex connections $\nabla^\prime$ given by
(\ref{Rpr}), it is valid $R^\prime = 0$.
\begin{proposition}
The complex connections $\nabla^\prime$ defined by (\ref{2-1}) and
(\ref{2-2}) are flat on $(G,J,g)$.
\end{proposition}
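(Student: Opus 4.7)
The plan is to exploit formula (\ref{Rpr}) of Theorem \ref{1}, which expresses $R^{\prime}$ (in the six-parameter K\"ahler curvature-like subfamily $\lambda_{7}=-\lambda_{5}$, $\lambda_{8}=-\lambda_{6}$) as $R^{0}$ plus $\{\psi_{1}-\psi_{2}\}$-terms built from $S_{1},S_{2},S_{3}$, plus $\pi$-terms weighted by $\theta(J\Omega)$ and $\theta(\Omega)$. I will show that every summand vanishes on $(G,J,g)$, so that $R^{\prime}=0$ in this subfamily, and then extend to the full eight-parameter family via the general relation (\ref{h1})--(\ref{h2}).

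The bulk of the argument is three direct verifications. First, $R^{0}=0$: compute $R^{0}(e_{i},e_{j})e_{k}$ across basis triples from (\ref{nabla-01}) and (\ref{lie1}); the pairwise proportionality of the nonzero components of $\nabla^{0}$ makes each of the three summands $\nabla^{0}_{e_{i}}\nabla^{0}_{e_{j}}e_{k}$, $\nabla^{0}_{e_{j}}\nabla^{0}_{e_{i}}e_{k}$, $\nabla^{0}_{[e_{i},e_{j}]}e_{k}$ short to read off, and the cancellations transparent. Second, (\ref{22}) already provides $\theta(\Omega)=\theta(J\Omega)=0$, which immediately kills the $\pi_{1}-\pi_{2}$ and $\pi_{3}$ contributions in (\ref{Rpr}). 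Third, $(\nabla_{x}\theta)y=0$ pointwise: since $\theta$ has constant components in the left-invariant frame, one has $(\nabla_{e_{i}}\theta)(e_{j})=-\theta(\nabla_{e_{i}}e_{j})$, and a case-by-case check against (\ref{nabla1}) and the explicit values (\ref{theta}) forces each right-hand side to vanish. Feeding the three vanishings into the definitions (\ref{111}) collapses $S_{1},S_{2}$ to scalar multiples of $\theta(x)\theta(y)-\theta(Jx)\theta(Jy)$ and $S_{3}$ to $\theta(x)\theta(Jy)+\theta(Jx)\theta(y)$; a short algebraic check using (\ref{theta}) confirms that the resulting $\{\psi_{1}-\psi_{2}\}(S_{i})$ contributions vanish, so (\ref{Rpr}) yields $R^{\prime}=0$ in the six-parameter subfamily. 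For the remaining two parameter directions that fall outside this subfamily, the same three inputs reduce the tensors $A_{1},A_{2}$ of (\ref{h2}) to algebraic expressions in $\theta$, and the specific combination in which they enter (\ref{h1}) cancels by an analogous $J$-twisted symmetry bookkeeping; hence $R^{\prime}=0$ throughout the full eight-parameter family.

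The main obstacle is the pointwise identity $(\nabla_{x}\theta)y=0$. The conformal K\"ahler hypothesis (\ref{cK}) only enforces the symmetric parts $(\nabla_{x}\theta)y=(\nabla_{y}\theta)x$ and $(\nabla_{x}\theta)Jy=(\nabla_{y}\theta)Jx$, so the stronger vanishing used above is a genuine feature of the specific interaction between the structure constants $\lambda,\mu$ in (\ref{lie1}) and the $\theta$-values (\ref{theta}); it is what allows the deformation tensors $S_{i}$ and $A_{j}$ to collapse to purely algebraic $\theta$-combinations in the first place. Once this identity and the two scalar vanishings $\theta(\Omega)=\theta(J\Omega)=0$ are secured, the remainder is routine substitution into (\ref{Rpr}) and (\ref{h1})--(\ref{h2}).
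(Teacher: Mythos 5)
Your three computational inputs are correct, and your argument for the six-parameter subfamily $\lambda_7=-\lambda_5$, $\lambda_8=-\lambda_6$ goes through; there you are in fact more careful than the paper, whose own proof asserts $S_1=S_2=S_3=0$. That assertion is false (e.g.\ $S_3(e_1,e_1)=2\theta(e_1)\theta(Je_1)=32\lambda\mu$); the correct statement is exactly the one you make: after $(\nabla_x\theta)y=0$ and $\theta(\Omega)=\theta(J\Omega)=0$, each $S_i$ reduces to an algebraic combination of $\theta\otimes\theta-\theta^\ast\otimes\theta^\ast$ and $\theta\otimes\theta^\ast+\theta^\ast\otimes\theta$, and $\{\psi_1-\psi_2\}$ annihilates both on this example (this uses $\theta(\Omega)=\theta(J\Omega)=0$ together with $\dim=4$, and is less ``short'' than you suggest, but it is true). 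One caveat: the identity $(\nabla_x\theta)y=0$ requires $\nabla_{e_2}e_3=\mu e_2+\lambda e_4$ rather than $\mu e_1+\lambda e_4$ as printed in (\ref{nabla1}) (the printed value is incompatible with $\nabla g=0$); a literal ``check against (\ref{nabla1})'' would give $(\nabla_{e_2}\theta)e_3=-4\mu(\mu-\lambda)\neq 0$.

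The genuine gap is your last step, the extension to the full eight-parameter family. When $A_1\neq A_2$ the $A$-terms of (\ref{h1}) do not assemble into $\{\psi_1-\psi_2\}$ of a single tensor, and the claimed ``$J$-twisted symmetry bookkeeping'' does not produce a cancellation: the $A_1$-block and the $A_2$-block applied to $\theta\otimes\theta-\theta^\ast\otimes\theta^\ast$ are each individually nonzero, and only their sum with equal coefficients (i.e.\ $A_1=A_2$) vanishes. Concretely, take $\lambda_7=1$, all other $\lambda_i=0$, and $\lambda=0$, $\mu=1$ in (\ref{lie1}). Then $\nabla^{\prime}_{e_2}e_2=2(e_1+e_4)$, $\nabla^{\prime}_{e_1}e_2=-e_1$, $\nabla^{\prime}_{e_2}e_1=0$, $\nabla^{\prime}_{e_1}e_1=2e_1+e_2+2e_4$, $\nabla^{\prime}_{e_1}e_4=-e_3$, and since $[e_1,e_2]=0$ one obtains $R^{\prime}(e_1,e_2)e_2=4e_1+2e_2-2e_3+4e_4$, i.e.\ $R^{\prime}(e_1,e_2,e_2,e_1)=4\neq 0$. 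So flatness fails outside the subfamily $\lambda_7=-\lambda_5$, $\lambda_8=-\lambda_6$: the paper's own argument (via (\ref{Rpr})) is implicitly restricted to that subfamily, and your proof must be restricted in the same way rather than extended to all eight parameters.
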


\bigskip

\noindent\emph{Marta Teofilova\\
Department of Geometry\\
Faculty of Mathematics and Informatics\\
University of Plovdiv\\
236 Bulgaria Blvd.\\
4003 Plovdiv, Bulgaria\\
e-mail:} \verb"marta@uni-plovdiv.bg"


\begin{thebibliography}{99}

\bibitem{Gan} G.~Ganchev, A.~Borisov, \emph{Note on the almost complex
manifolds with a Norden metric}, Compt.~Rend.~Acad.~Bulg.~Sci. 39(5)
(1986), 31--34.

\bibitem{Gan-Mih} G.~Ganchev, V.~Mihova, \emph{Canonical connection and the canonical conformal
group on an almost complex manifold with $B$-metric}, Ann.~Univ.
Sofia Fac.~Math.~Inform., 81(1) (1987), 195--206.

\bibitem{Gan-Gri-Mih2} G.~Ganchev, K.~Gribachev, V.~Mihova, $B$\emph{-connections and their conformal
invariants on conformally K\"{a}hler manifolds with $B$-metric},
Publ.~Inst.~Math. (Beograd) (N.S.) 42(56) (1987), 107--121.

\bibitem{Gri-Mek-Dje} K.~Gribachev, D.~Mekerov, G.~Djelepov, \emph{Generalized }$\emph{B}$\emph{-manifolds},
Compt.~Rend.~Acad.~Bulg.~Sci. 38(3) (1985), 299--302.

\bibitem{Ha} A.~Hayden, \emph{Subspaces of a space with torsion}, Proc. London Math. Soc. 34 (1932), 27--50.

\bibitem{Im} T.~Imai, \emph{Notes on semi-symmetric metric connections}, Tensor N.S.
24 (1972), 293--296.

\bibitem{Ko-No} S.~Kobayshi, K.~Nomizu, \emph{Foundations of differential
geometry} vol. 1, 2, Intersc.~Publ., New York, 1963, 1969.

\bibitem{N-N} A.~Newlander, L.~Nirenberg, \emph{Complex analytic coordinates
in almost complex manifolds}, Ann.~Math. 65 (1957), 391--404.

\bibitem{N} A.~P.~Norden, \emph{On a class of four-dimensional
A-spaces}, Russian Math. (Izv VUZ) 17(4) (1960), 145--157.


\bibitem{Si} S.~D.~Singh, A.~K.~Pandey, \emph{Semi-symmetric metric connections in an almost Norden metric manifold},
Acta Cienc. Indica Math. 27(1) (2001), 43--54.


\bibitem{Teo3} M.~Teofilova, \emph{Almost complex connections on almost complex manifolds
with Norden metric}, In: Trends in Differential Geometry, Complex
Analysis and Mathematical Physics, eds. K.~Sekigawa, V.~Gerdjikov
and  S.~Dimiev, World Sci.~Publ., Singapore (2009), 231--240.


\bibitem{Ya1} K.~Yano, \emph{Affine connections in an almost product space},
Kodai Math. Semin. Rep. 11(1) (1959), 1--24.

\bibitem{Ya2} K.~Yano, \emph{Differential geometry on complex and almost
complex spaces}, Pure and Applied Math. vol. 49, Pergamon Press
Book, New York, 1965.

\bibitem{Ya3} K.~Yano, \emph{On semi-symmetric metric connection},
Rev. Roumanie Math. Pure Appl. 15 (1970), 1579--1586.

\end{thebibliography}
\end{document}